\newtheorem{theorem}{Theorem}[section]
\newtheorem*{theorem*}{Theorem}
\newtheorem{lemma}[theorem]{Lemma}
\newtheorem{proposition}[theorem]{Proposition}
\newtheorem{corollary}[theorem]{Corollary}
\newtheorem*{conjecture*}{Conjecture}
\newtheorem*{question*}{Question}
\theoremstyle{remark}
\newtheorem{remark}[theorem]{Remark}
\theoremstyle{definition}
\newtheorem{definition}[theorem]{Definition}
\newcommand{\dimv}{\underline{\dim}\,}
\numberwithin{equation}{section}
\begin{document}

\title[Riedtman's Lie algebra of $\Lambda(n-1,1,1)$]{On Riedtmann's Lie algebra of the gentle one-cycle algebra $\Lambda(n-1,1,1)$}

\author{Hui Chen, and Dong Yang}
\dedicatory{Dedicated to Jie Xiao on the occasion of his 60th birthday}

\address{Hui Chen, Department of Mathematics, Nanjing University, Nanjing, China;
School of Biomedical Engineering and Informatics, Nanjing Medical University, Nanjing, China.}
\email{huichen@njmu.edu.cn}

\address{Dong Yang, Department of Mathematics, Nanjing University, Nanjing 210093, P. R. China}
\email{yangdong@nju.edu.cn}

\begin{abstract} An extended version of Riedtmann's Lie algebra of the gentle one-cycle algebra $\Lambda(n-1,1,1)$ is computed and is shown to admit a Cartan decomposition by the positive roots of the root system of type $\mathbf{BC}_n$.\\
\noindent{Key words: root system of type $\mathbf{BC}$, Riedtmann Lie algebra, Euler form}\\
\noindent MSC2020: 16G20, 17B37.

\end{abstract}
\maketitle

\section{Introduction}
\label{makereference:intro}

The gentle one-cycle algebra $\Lambda(n-1,1,1)$ is defined as the quotient of the path algebra of 
\begin{equation*}
\xymatrix@R=1ex{
1 \ar[r]&2\ar[r]&\cdots\ar[r]&n\ar@(ur,dr)^{\alpha}}
\end{equation*}
by the ideal generated by $\alpha^2$. This algebra is known to be derived-discrete \cite{Vossieck01,BobinskiGeissSkowronski04}, and in recent years it has attracted much attention mainly because it appears in the theory of cluster tubes \cite{Vatne11, Yang12, ZhouZhu14, FuGengLiu21,FuGengLiu20}. It is of particular interest that on the one hand due to \cite{BuanMarshVatne10}, the support $\tau$-tilting theory of this algebra can be used to model the cluster combinatorics of type $\mathbf{B}_n$, and on the other hand, a Caldero-Chapoton map is defined in \cite{ZhouZhu14,FuGengLiu21,FuGengLiu20} to realise the cluster algebra of type $\mathbf{C}_n$. Moreover, the algebra is Iwanaga--Gorenstein of dimension $1$ and is used to realise the positive part of the simple complex Lie algebra of type $\mathbf{C}_n$, as an example of the general theory on constructing non-simply-laced Lie algebras in \cite{GeissLeclercSchroeer16}.

This paper is concerned with the structure of Riedtmann's Lie algebra $L(A)$ of $A=\Lambda(n-1,1,1)$, which is the free $\mathbb{Z}$-module with basis the isomorphism classes of indecomposable $A$-modules endowed with the Lie bracket with structure constants given by the Euler characteristics of suitable varieties of submodules. The algebra $A$ has only finitely many isomorphism classes of indecomposable modules and a classification is given in \cite{BoosReineke12}. This helps us to obtain a complete description of the Lie bracket of $L(A)$ (Proposition~\ref{prop:lie-brackets-of-L(A)}). Moreover, we introduce a symmetric bilinear form on the Grothendieck group of $A$ in terms of the Cartan matrix of $A$ (Definition~\ref{def:bilinear-form}). It turns out to be a modified symmetric Euler form\footnote{Note that the usual Euler form is not well-defined since $A$ has infinite global dimension.} (Theorem~\ref{thm:euler-form-vs-cartan-matrix}).  This form allows us to establish a Gabriel's theorem for $A$ (Theorem~\ref{thm:Gabriel's-theorem}): taking dimension vectors defines a surjective map from the set of isomorphism classes of indecomposable $A$-modules to the set of positive roots of the root system of type $\mathbf{BC}_n$, which sends the symmetric Euler form to the natural bilinear form on roots. We also use this form to extend the Riedtmann Lie algebra $L(A)$ by adding an abelian Lie algebra given by the Grothendieck group of $A$, following \cite{Xiao97,PengXiao97}.

The next step is to describe the extended Riedtmann's Lie algebra $\tilde{L}(A)$ by generators and relations. We introduce a complex Lie algebra $\mathfrak{g}$, which is generated by $x_1, x_2, \ldots, x_{n-1}, x_n, x'_n, h_1, \ldots, h_n$ subject to the following relations:
\begin{itemize}
  \item[-] $\{x_1, \ldots, x_{n-1}, x_n, h_1, \ldots, h_{n-1}, h_n\}$ satisfy the Serre relations for the Borel subalgebra $\mathfrak{b}_{\mathbf{B}}$ of the simple complex Lie algebra of type $\mathbf{B}$;
  \item[-] $\{x_1, \ldots, x_{n-1}, x'_n, h_1, \ldots, h_{n-1}, \frac{1}{2}h_n\}$ satisfy the Serre relations for the Borel subalgebra $\mathfrak{b}_{\mathbf{C}}$ of the simple complex Lie algebra of type $\mathbf{C}$;
  \item[-] $[x_n, x'_n]=0$ and $[[x_{n-1},x_n],x'_n]=0$.
\end{itemize}
Then $\mathfrak{g}=\mathfrak{b}_{\mathbf{B}}+\mathfrak{b}_{\mathbf{C}}$. Moreover, the space $\mathfrak{h}=\mathrm{span}\{h_1,\ldots,h_n\}$ is a Cartan subalgebra of $\mathfrak{g}$ and the corresponding Cartan decomposition of $\mathfrak{g}$ is given by the positive roots of the root system of type $\mathbf{BC}_n$ (Theorem~\ref{thm:root-space-decomposition}).

Let $S_1,\ldots,S_n$ be the simple $A$-modules supported on the vertices $1,\ldots,n$, respectively, and let $S'_n$ be the unique 2-dimensional indecomposable $A$-module supported on the vertex $n$. Let $h_{S_1},\ldots,h_{S_n},h_{S'_n}$ be the image of $S_1,\ldots,S_n,S'_n$ in the Grothendieck group, respectively. The following theorem is our main result (see Theorem~\ref{thm:ringel-hall-lie-algebra}). 

\begin{theorem}
The assignment $h_i\mapsto h_{S_i} (1\leq i\leq n-1),\frac{1}{2}h_n\mapsto h_{S_n}, x_i\mapsto S_i (1\leq i\leq n), x'_n\mapsto S'_n$ extends to an isomorphism $\mathfrak{g}\longrightarrow\tilde{L}(A)\otimes_{\mathbb{Z}}\mathbb{C}$ of complex Lie algebras.
\end{theorem}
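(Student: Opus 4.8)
The plan is to construct the homomorphism $\mathfrak{g}\to\tilde L(A)\otimes_{\mathbb Z}\mathbb C$ by checking that the proposed images satisfy the defining relations of $\mathfrak{g}$, and then to prove that this homomorphism is bijective by comparing the two sides dimension by dimension along the root-space (respectively indecomposable-module) decompositions. By Theorem~\ref{thm:root-space-decomposition}, $\mathfrak{g}=\mathfrak{h}\oplus\bigoplus_{\alpha}\mathfrak{g}_\alpha$ where $\alpha$ ranges over the positive roots of $\mathbf{BC}_n$ and each $\mathfrak{g}_\alpha$ is one-dimensional, except the ones corresponding to the ``doubled'' short/long roots at the end, where the fibre has dimension two. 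On the other side, $\tilde L(A)\otimes_{\mathbb Z}\mathbb C=\mathfrak{h}_A\oplus\bigoplus_{[M]}\mathbb C\,[M]$, the sum over isomorphism classes of indecomposable $A$-modules, and Gabriel's theorem (Theorem~\ref{thm:Gabriel's-theorem}) tells us the dimension vector map $[M]\mapsto\dimv M$ surjects onto the positive roots of $\mathbf{BC}_n$, with the fibre sizes matching exactly the multiplicities appearing in $\mathfrak{g}$. So the ranks of the two free parts agree, and the Cartan parts $\mathfrak{h}$ and $\mathfrak{h}_A\otimes\mathbb C$ are both $n$-dimensional; a surjective Lie algebra homomorphism between spaces of equal finite dimension is an isomorphism.

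Concretely, I would proceed as follows. \textbf{Step 1.} Verify that $h_{S_i}$, $S_i$, $S'_n$ as specified satisfy all the relations listed in the definition of $\mathfrak{g}$: the two families of Serre relations (for $\mathfrak{b}_{\mathbf B}$ and $\mathfrak{b}_{\mathbf C}$), together with $[S_n,S'_n]=0$ and $[[S_{n-1},S_n],S'_n]=0$. The Cartan-part relations $[h_{S_i},h_{S_j}]=0$ are immediate since the extending abelian Lie algebra is the Grothendieck group. The relations $[h_{S_i},S_j]=(\text{Cartan integer})\,S_j$ and $[h_{S_i},S'_n]=\cdots\,S'_n$ follow from Definition~\ref{def:bilinear-form} and Theorem~\ref{thm:euler-form-vs-cartan-matrix}, once we note that the normalisation $\tfrac12 h_n\mapsto h_{S_n}$ is exactly what converts the $\mathbf{B}$-side Cartan integers and the $\mathbf{C}$-side Cartan integers simultaneously into the correct structure constants read off from the symmetric form. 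The Serre relations among the $S_i$ and the two vanishing relations involving $S'_n$ are then read directly off the explicit Lie bracket table of $L(A)$ (Proposition~\ref{prop:lie-brackets-of-L(A)}); this is a finite check, the main input being that $\Ext$- and submodule-variety computations for the finitely many indecomposables have already been done.

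\textbf{Step 2.} Conclude that the assignment extends to a homomorphism $\varphi\colon\mathfrak{g}\to\tilde L(A)\otimes_{\mathbb Z}\mathbb C$ of Lie algebras by the universal property of a presentation. \textbf{Step 3.} Show $\varphi$ is surjective: $\mathfrak{h}_A\otimes\mathbb C$ is hit because $h_{S_1},\ldots,h_{S_n}$ form a basis of the Grothendieck group (the Cartan matrix of $A$ is invertible over $\mathbb Q$), and every other basis element $[M]$ of $\tilde L(A)$ lies in the image because $\mathfrak{g}=\mathfrak{b}_{\mathbf B}+\mathfrak{b}_{\mathbf C}$ is generated by $x_1,\ldots,x_n,x'_n$ and their iterated brackets, so each root vector of $L(A)$ is a bracket of the $S_i$'s and $S'_n$ up to scalar. \textbf{Step 4.} Match dimensions: combine the root-space decomposition of $\mathfrak{g}$ from Theorem~\ref{thm:root-space-decomposition} with Gabriel's theorem for $A$ (Theorem~\ref{thm:Gabriel's-theorem}) to see $\dim_{\mathbb C}\mathfrak{g}=n+\#\{\text{positive roots of }\mathbf{BC}_n\text{ counted with the appropriate multiplicity}\}=\rank_{\mathbb Z}\tilde L(A)=\dim_{\mathbb C}\tilde L(A)\otimes\mathbb C$. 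A surjection of finite-dimensional vector spaces of equal dimension is an isomorphism, so $\varphi$ is an isomorphism of Lie algebras.

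The main obstacle I anticipate is \textbf{Step 1}, specifically the compatibility of the two differently normalised Cartan conventions: the $\mathbf{B}$-presentation uses $h_n$ while the $\mathbf{C}$-presentation uses $\tfrac12 h_n$, and on the Hall-Lie side there is only the single element $h_{S_n}$. One must check that $h_{S_n}$ really does act with the $\mathbf{B}$-type eigenvalues on $S_1,\ldots,S_{n-1},S_n$ and with $2\times(\mathbf{C}$-type eigenvalues$)$ on $S'_n$ simultaneously, and that this is consistent with the vanishing relations $[S_n,S'_n]=0$ and $[[S_{n-1},S_n],S'_n]=0$ coming from Proposition~\ref{prop:lie-brackets-of-L(A)}. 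This is exactly the point where the ``$\mathbf{BC}$'' rather than ``$\mathbf B$ or $\mathbf C$'' structure is forced, and getting the scalars right requires careful bookkeeping with the symmetric Euler form of Theorem~\ref{thm:euler-form-vs-cartan-matrix}. Everything else is either formal (Steps 2--3) or a dimension count that has essentially been set up by the earlier Gabriel-type theorem (Step 4).
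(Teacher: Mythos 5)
Your proposal follows essentially the same route as the paper's proof of Theorem~\ref{thm:ringel-hall-lie-algebra}: verify the defining relations of $\mathfrak{g}$ against the explicit bracket table (Proposition~\ref{prop:lie-brackets-of-L(A)}) and the symmetric form, obtain a homomorphism by the universal property of the presentation, and conclude bijectivity from surjectivity together with the matching of dimensions coming from Theorem~\ref{thm:root-space-decomposition} and Theorem~\ref{thm:Gabriel's-theorem}. The one point to tighten is your justification of surjectivity: the fact that $\mathfrak{g}$ is generated by $x_1,\ldots,x_n,x'_n$ does not by itself imply that $L(A)\otimes\mathbb{C}$ is generated by $S_1,\ldots,S_n,S'_n$ --- this must be checked directly from the bracket table (it is the paper's Corollary~\ref{cor:generators-of-L(A)}, whose proof uses, e.g., $U_{j,n}=\tfrac12\bigl([W_{j,n-1},U_{n,n}]-[V_j,V_n]\bigr)$, the source of the denominators of $2$).
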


Therefore, the Lie subalgebra of $\tilde{L}(A)$ generated by $S_1,\ldots,S_{n-1},S_n,h_{S_1},\ldots,h_{S_{n-1}}$, $2h_{S_n}(=h_{S'_n})$ is isomorphic to $\mathfrak{b}_{\mathbf{B}}$ and  the Lie subalgebra generated by $S_1,\ldots,S_{n-1},S'_n$, $h_{S_1},\ldots,h_{S_{n-1}},\frac{1}{2}h_{S'_n}(=h_{S_n})$ is isomorphic to $\mathfrak{b}_{\mathbf{C}}$. We remark that the Lie subalgebra generated by $S_1,\ldots,S_{n-1},S'_n$ is exactly the Lie algebra $\mathcal{P}(\mathcal{M})$ in \cite{GeissLeclercSchroeer16}.

\medskip

The structure of the paper is as follows. In Section~\ref{s:Lie-algebra-type-BC} we recall the root system of type $\mathbf{BC}$, introduce the Lie algebra $\mathfrak{g}$ and study its Cartan decomposition. In Section~\ref{s:representation-theory} we introduce a symmetric bilinear form for the algebra $\Lambda(n-1,1,1)$ and establish a Gabriel's theorem. In Section~\ref{s:ringel-hall-lie-algebra} we describe the Lie bracket for the extended Riedtmann's Lie algebra $\tilde{L}(A)$ and prove the isomorphism between $\mathfrak{g}$ and $\tilde{L}(A)$. In the appendix we show that the symmetric bilinear form introduced in Section~\ref{s:representation-theory} is a modified symmetric Euler form.

\smallskip
\noindent{\it Acknowledgement.} The authors thank Bangming Deng and Changjian Fu for answering their questions. They thank Changjian Fu for reading a preliminary version. The second author acknowledges support by the National Natural Science Foundation of China No.12031007.

\section{A Lie algebra of type $\mathbf{BC}^+$}
\label{s:Lie-algebra-type-BC}

In this section we introduce a complex Lie algebra, which admits a Cartan decomposition by the positive roots of the root system of type $\mathbf{BC}$.

\subsection{The root system of type $\mathbf{BC}$}
\label{ss:root-sysmtem-type-bc}

Let $\mathbb{R}^n$ be the $n$-dimensional real space with the usual metric $(-,-)$, and $\varepsilon_1,\ldots, \varepsilon_n$ be the natural basis. In classical Lie theory, the root systems of type $\mathbf{B_n}$, $\mathbf{C_n}$ and $\mathbf{BC_n}$ are (see Bourbaki \cite[Chapter VI, Sections 4.5, 4.6 and 4.14]{Bourbaki68})
\begin{align*}
\Phi_\mathbf{B}=&\{\pm\varepsilon_i\pm\varepsilon_j\mid 1\leq i< j \leq n\} \cup \{\pm \varepsilon_i \mid 1\leq i\leq n\},\\
\Phi_\mathbf{C}=&\{\pm\varepsilon_i\pm\varepsilon_j\mid 1\leq i< j \leq n\} \cup \{\pm 2\varepsilon_i \mid 1\leq i\leq n\},\\
\Phi_\mathbf{BC}=&\{\pm\varepsilon_i\pm\varepsilon_j\mid 1\leq i< j \leq n\} \cup \{\pm \varepsilon_i \mid 1\leq i\leq n\}\cup \{\pm 2\varepsilon_i \mid i=1,\ldots,n\}.
\end{align*}
Clearly $\Phi_\mathbf{BC}=\Phi_\mathbf{B}\cup \Phi_\mathbf{C}$. The corresponding simple roots are
\begin{align*}
\Delta_\mathbf{B}=&\{\varepsilon_i-\varepsilon_{i+1}\mid 1\leq i\leq n-1\} \cup \{\varepsilon_n \},\\
\Delta_\mathbf{C}=&\{\varepsilon_i-\varepsilon_{i+1}\mid 1\leq i\leq n-1\} \cup \{2\varepsilon_n\},\\
\Delta_\mathbf{BC}=&\{\varepsilon_i-\varepsilon_{i+1}\mid 1\leq i\leq n-1\} \cup \{\varepsilon_n \}.
\end{align*}
The corresponding positive roots are
\begin{align*}
\Phi^+_\mathbf{B}=&\{\varepsilon_i+\varepsilon_j\mid 1\leq i< j \leq n\} \cup\{\varepsilon_i-\varepsilon_j\mid 1\leq i< j \leq n\} \cup \{\varepsilon_i \mid 1\leq i\leq n\},\\
\Phi^+_\mathbf{C}=&\{\varepsilon_i+\varepsilon_j\mid 1\leq i< j \leq n\} \cup\{\varepsilon_i-\varepsilon_j\mid 1\leq i< j \leq n\} \cup \{2\varepsilon_i \mid 1\leq i\leq n\},\\
\Phi^+_\mathbf{BC}=&\{\varepsilon_i\pm\varepsilon_j\mid 1\leq i< j \leq n\}  \cup \{\varepsilon_i \mid 1\leq i\leq n\} \cup\{2\varepsilon_i \mid 1\leq i\leq n\}.
\end{align*}

In the rest of this paper we will use $\Phi$, $\Phi^{+}$ and $\Delta$ to denote $\Phi_\mathbf{BC}$, $\Phi^{+}_\mathbf{BC}$ and $\Delta_\mathbf{BC}$ for simplicity.

\subsection{The Borel subalgebras of the Lie algebras of type $\mathbf{B}$ and type $\mathbf{C}$}
\label{ss:Lie-alg-type-B-and-C}
In this section, we review some basic results on the Borel subalgebras of the simple complex Lie algebras of type $\mathbf{B_n}$ and type $\mathbf{C_n}$. Most of the results in this subsection are from J. E. Humphreys \cite{Humphreys72} and V. Kac \cite{Kac90}, but we will change some notations.

\subsubsection{The Borel subalgebra of the Lie algebra of type $\mathbf{B}$}
\label{B}\label{sss:lie-algebra-type-B}

Let $\mathfrak{b}_{\mathbf B}$ be the Borel subalgebra of the simple complex Lie algebra of type $\mathbf{B}_n$, i.e., it is the complex Lie algebra generated by $\{x_i, h_i\mid 1\leq i\leq n\}$ with relations:
\begin{itemize}\label{RL}
\item[\rm(B1)] $[h_i, h_j]=0$, $1\leq i,j \leq n$.
\item[\rm(B2)] \[[h_i, x_j]=\left\{
                              \begin{array}{ll}
                                2x_j, & 1\leq i=j\leq n; \\
                                -x_j, &\mid i-j\mid=1 \text{ but } i\neq n;\\
                                -2x_j, & j=n-1 \text{ and } i=n;\\
                                0,& \text{otherwise}.
                              \end{array}
                            \right.\]
\item[\rm(B3)] When $i\neq j$, \[\left\{
                              \begin{array}{ll}
                              [x_i, [x_i, x_j]]=0, &\mid i-j\mid=1 \text{ but } i\neq n;\\
                              {[}x_i,{[}x_i,{[}x_i,x_j{]]]}=0, & j=n-1 \text{ and } i=n;\\
                              {[}x_i,x_j{]}=0,& \text{otherwise}.
                              \end{array}
                            \right.
\]
\end{itemize}

Let $\mathfrak{h}$ be the subspace of $\mathfrak{b}_{\mathbf B}$ spanned by $h_1, \ldots, h_n$ and $\mathfrak{g}_{\mathbf B}^+$ be the Lie subalgebra generated by $x_1,\ldots,x_n$. Then $\mathfrak{h}$ is a maximal abelian subalgebra of $\mathfrak{g}_{\mathbf B}$, $\mathfrak{b}_{\mathbf B}=\mathfrak{h}\oplus\mathfrak{g}_{\mathbf B}^+$ and $\mathfrak{g}_{\mathbf B}^+$ admits the decomposition $\mathfrak{g}_{\mathbf B}^+=\bigoplus_{\mu\in \Phi^+_{\mathbf{B}}}\mathfrak{g}_{\mathbf B}(\mu)$, where
\[\mathfrak{g}_{\mathbf B}(\mu)=\{x\in \mathfrak{b}_{\mathbf B}\mid [h, x]=\mu(h)x, \forall h\in \mathfrak{h}\},\]
and the matrix $(\varepsilon_i(h_j))_{i,j}$ is
\[\left(
 \begin{array}{ccccc}
  1 &  &  &    &  \\
  -1 & 1 & \ &  &  \\
   & \ddots & \ddots &  &  \\
   &  & -1  & 1 &  \\
   &  &    & -1& 2 \\
  \end{array}
  \right).\]
\noindent Precisely, putting $x_{i,i}=x_i$ for $1\leq i\leq n$ and $x_{i,j}=[x_{i,j-1}, x_j]$ for $1\leq i<j\leq n$, we have:
\begin{enumerate}
  \item $\mathfrak{g}_{\mathbf B}(\varepsilon_i-\varepsilon_j) (1\leq i< j \leq n)$ is $1$-dimensional with basis $x_{i,j-1}$.
  \item $\mathfrak{g}_{\mathbf B}(\varepsilon_i) (1\leq i\leq n)$ is $1$-dimensional with basis $x_{i,n}$.
  \item $\mathfrak{g}_{\mathbf B}(\varepsilon_i+\varepsilon_j) (1\leq i< j \leq n)$ is $1$-dimensional with basis $[x_{i,n},x_{j,n}]$.
\end{enumerate}

\subsubsection{The Borel subalgebra of the Lie algebra of type $\mathbf{C}$} 
\label{C}\label{sss:lie-algebra-type-C}
Let $\mathfrak{b}_{\mathbf C}$ be the Borel subalgebra of the simple complex Lie algebra of type $\mathbf{C}_n$, i.e., it is the complex Lie algebra generated by $\{x'_i, h'_i\mid 1\leq i\leq n\}$ with relations:
\begin{itemize}\label{RL}
  \item[\rm(C1)] $[h'_i, h'_j]=0$, $1\leq i,j \leq n$.
  \item[\rm(C2)]  \[[h'_i, x'_j]=\left\{
                              \begin{array}{ll}
                                2x'_j, & 1\leq i=j\leq n; \\
                                -x'_j, &\mid i-j\mid=1 \text{ but } j\neq n;\\
                                -2x'_j, & j=n \text{ and } i=n-1;\\
                                0,& \text{otherwise}.
                              \end{array}
                            \right.
\]
  \item[\rm(C3)] When $i\neq j$, \[\left\{
                              \begin{array}{ll}
                              [x'_i,[x'_i,x'_j]]=0, &\mid i-j\mid=1 \text{ but } j\neq n;\\
                              {[}x'_i,{[}x'_i,{[}x'_i,x'_j{]]]}=0, & j=n \text{ and } i=n-1;\\
                              {[}x'_i,x'_j{]}=0,& \text{otherwise}.
                              \end{array}
                            \right.
\]
\end{itemize}

Let $\mathfrak{h}'$ be the subspace of $\mathfrak{b}_{\mathbf C}$ spanned by $h'_1, \ldots, h'_n$ and $\mathfrak{g}_{\mathbf C}^+$ be the Lie subalgebra generated by $x'_1,\ldots,x'_n$. Then $\mathfrak{h}'$ is a maximal abelian subalgebra of $\mathfrak{b}_{\mathbf C}$, $\mathfrak{b}_{\mathbf C}=\mathfrak{h}\oplus\mathfrak{g}_{\mathbf C}^+$ and $\mathfrak{g}_{\mathbf C}^+$ admits the decomposition $\mathfrak{g}_{\mathbf C}^+=\bigoplus_{\mu\in \Phi_{\mathbf{C}}^+}\mathfrak{g}_{\mathbf C}(\mu)$, where \[\mathfrak{g}_{\mathbf C}(\mu)=\{x\in \mathfrak{b}_{\mathbf C}\mid [h', x]=\mu(h')x, \forall h'\in \mathfrak{h'}\},\]
and the matrix $(\varepsilon_i(h'_j))_{i,j}$ is
\[\left(
 \begin{array}{ccccc}
  1 &  &  &    &  \\
  -1 & 1 & \ &  &  \\
   & \ddots & \ddots &  &  \\
   &  & -1  & 1 &  \\
   &  &    & -1& 1\\
  \end{array}
  \right).\]

\noindent Precisely, putting $x'_{i,i}=x'_i$ for $1\leq i\leq n$ and $x'_{i,j}=[x'_{i,j-1}, x'_j]$ for $1\leq i<j\leq n$, we have
\begin{enumerate}
  \item $\mathfrak{g}_{\mathbf C}(\varepsilon_i-\varepsilon_j)(1\leq i< j \leq n)$ is $1$-dimensional with basis $x'_{i,j-1}$.
  \item $\mathfrak{g}_{\mathbf C}(2\varepsilon_i) (1\leq i< n)$ is $1$-dimensional with basis $[x'_{i,n-1},x'_{i,n}]$ and $\mathfrak{g}_{\mathbf C}(2\varepsilon_n)$ is $1$-dimensional with basis $x'_{n,n}$.
  \item $\mathfrak{g}_{\mathbf C}(\varepsilon_i+\varepsilon_j) (1\leq i< j < n)$ is $1$-dimensional with basis $[x'_{j,n-1},x'_{i,n}]$ and $\mathfrak{g}_{\mathbf C}(\varepsilon_i+\varepsilon_n) (1\leq i< n)$ is $1$-dimensional with basis $x'_{i,n}$.
\end{enumerate}

\subsection{A Lie algebra of type $\mathbf {BC}^+$}
\label{ss:lie-algebra-type-bc}

\begin{definition}
Let $\mathfrak{g}$ be the complex Lie algebra generated by \[x_1, x_2, \ldots, x_{n-1}, x_n, x'_n, \text{ and }h_1, \ldots, h_n\] satisfying the following relations:
\begin{enumerate}
  \item[(BC1)] $\{x_1, \ldots, x_n, h_1, \ldots, h_n\}$ satisfy the relations (B1), (B2) and (B3) as in Section~\ref{sss:lie-algebra-type-B};
  \item[(BC2)] $\{x'_1, \ldots, x'_n, h'_1, \ldots, h'_n\}$ satisfy the relations (C1), (C2) and (C3) as in Section~\ref{sss:lie-algebra-type-C}, where $x'_i=x_i$, and $h'_i=h_i$ for $1\leq i\leq n-1$, and $h'_n=\frac{1}{2}h_n$;
  \item[(BC3)] $[x_n, x'_n]=0$ and $[[x_{n-1},x_n],x'_n]=0$.
\end{enumerate}
\end{definition}

It is clear that $\mathfrak{b}_{\mathbf{B}}$ and $\mathfrak{b}_{\mathbf{C}}$ are Lie subalgebras of $\mathfrak{g}$, and that $\mathfrak{h}={\rm span} \{h_1, \ldots, h_n\}$ is a maximal abelian subalgebra of $\mathfrak{g}$. Denote by $\mathfrak{g}^+$ the Lie subalgebra of $\mathfrak{g}$ generated by $x_1, \ldots,  x_n, x'_n$. Then $\mathfrak{g}_{\mathbf B}^+$ and $\mathfrak{g}_{\mathbf C}^+$ are Lie subalgebras of $\mathfrak{g}^+$. We keep the notation in Sections~\ref{sss:lie-algebra-type-C} and~\ref{sss:lie-algebra-type-B} for basis elements of $\mathfrak{g}_{\mathbf B}^+$ and $\mathfrak{g}_{\mathbf C}^+$. Notice that $x'_{i,j}=x_{i,j}$ for $1\leq i\leq j\leq n-1$.

\begin{lemma}
\label{BC}\label{lem:BC=B+C}
$\mathfrak{g}=\mathfrak{h}\oplus\mathfrak{g}^+$, 
and $\mathfrak{g^+}=\mathfrak{g}_{\mathbf B}^++\mathfrak{g}_{\mathbf C}^+$.
\end{lemma}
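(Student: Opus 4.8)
The plan is to establish the two claimed decompositions separately, the first being essentially formal and the second being the substantive point.

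\textit{Step 1: The decomposition $\mathfrak{g}=\mathfrak{h}\oplus\mathfrak{g}^+$.} First I would observe that $\mathfrak{g}$ is generated by $\mathfrak{h}$ together with $\mathfrak{g}^+$, since $\mathfrak{g}$ is generated by $x_1,\ldots,x_n,x'_n$ and $h_1,\ldots,h_n$. To see that the sum is direct, I would exhibit a $\Z$-grading (or rather an $\N^n$-grading refined by positive roots): assign to each $x_i$, $x'_n$ a positive degree and to each $h_i$ degree $0$, compatible with all the defining relations (B2), (C2), (BC3) — this is possible because every relation is homogeneous once we use the root-theoretic weights, and (BC3) only involves the $x$'s. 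Then $\mathfrak{h}$ sits in degree $0$ while $\mathfrak{g}^+$ is spanned by positive-degree elements, and since $\mathfrak{h}$ is abelian with $[h_i,h_j]=0$, the degree-$0$ part of $\mathfrak{g}$ is exactly $\mathfrak{h}$ (no brackets of positive-degree generators land back in degree $0$). Hence $\mathfrak{h}\cap\mathfrak{g}^+=0$ and the sum is direct. Alternatively, and perhaps more cleanly, one can invoke that $\mathfrak{b}_{\mathbf B}=\mathfrak{h}\oplus\mathfrak{g}_{\mathbf B}^+$ is already known (Section~\ref{sss:lie-algebra-type-B}) and that the composite $\mathfrak{h}\hookrightarrow\mathfrak{g}\twoheadrightarrow\mathfrak{g}/\mathfrak{g}^+$ is injective because it is so after further projecting via the retraction $\mathfrak{g}\to\mathfrak{b}_{\mathbf B}$; but the grading argument is self-contained.

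\textit{Step 2: The decomposition $\mathfrak{g}^+=\mathfrak{g}_{\mathbf B}^++\mathfrak{g}_{\mathbf C}^+$.} The inclusion $\supseteq$ is clear since $\mathfrak{g}_{\mathbf B}^+$ and $\mathfrak{g}_{\mathbf C}^+$ are generated inside $\mathfrak{g}^+$ by $x_1,\ldots,x_n$ and $x_1,\ldots,x_{n-1},x'_n$ respectively. For $\subseteq$ it suffices to show that $\mathfrak{g}_{\mathbf B}^++\mathfrak{g}_{\mathbf C}^+$ is closed under the bracket, since it contains all the generators of $\mathfrak{g}^+$. Using bilinearity, I only need to check that $[\mathfrak{g}_{\mathbf B}^+,\mathfrak{g}_{\mathbf C}^+]\subseteq\mathfrak{g}_{\mathbf B}^++\mathfrak{g}_{\mathbf C}^+$ (the brackets $[\mathfrak{g}_{\mathbf B}^+,\mathfrak{g}_{\mathbf B}^+]$ and $[\mathfrak{g}_{\mathbf C}^+,\mathfrak{g}_{\mathbf C}^+]$ land in the respective subalgebras). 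The two subalgebras overlap in the subalgebra $\mathfrak{n}$ generated by $x_1,\ldots,x_{n-1}$ (spanned by the $x_{i,j}=x'_{i,j}$, $1\le i\le j\le n-1$, an $\mathbf A_{n-1}$-nilpotent), and $\mathfrak{g}_{\mathbf B}^+=\mathfrak{n}\oplus(\text{part involving }x_n)$, $\mathfrak{g}_{\mathbf C}^+=\mathfrak{n}\oplus(\text{part involving }x'_n)$. So the crux is to control brackets of the ``$x_n$-part'' of $\mathfrak{g}_{\mathbf B}^+$ against the ``$x'_n$-part'' of $\mathfrak{g}_{\mathbf C}^+$. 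Here I expect to argue by induction on bracket length using the Jacobi identity, feeding in the relations (BC3): $[x_n,x'_n]=0$ and $[[x_{n-1},x_n],x'_n]=0$ are precisely the base cases that let one push any bracket of an element built from $x_n$'s against an element built from $x'_n$'s down to something already in $\mathfrak{n}$, $\mathfrak{g}_{\mathbf B}^+$, or $\mathfrak{g}_{\mathbf C}^+$. Concretely one shows by induction that $[x_{i,n},x'_n]$ and $[x'_{i,n},x_n]$ and their iterates reduce, via Jacobi and (BC3), to $\Z$-combinations of basis elements listed in Sections~\ref{sss:lie-algebra-type-B} and~\ref{sss:lie-algebra-type-C}.

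\textit{Main obstacle.} The delicate part is Step 2: verifying that the bracket of a general element of $\mathfrak{g}_{\mathbf B}^+$ with a general element of $\mathfrak{g}_{\mathbf C}^+$ stays in the sum, i.e.\ that the two relations in (BC3) genuinely suffice to ``decouple'' $x_n$ from $x'_n$ at all bracket lengths. The bookkeeping runs over the explicit bases $\{x_{i,j-1}\}$, $\{x_{i,n}\}$, $\{[x_{i,n},x_{j,n}]\}$ on the $\mathbf B$-side and $\{x'_{i,j-1}\}$, $\{[x'_{i,n-1},x'_{i,n}]\}$, $\{x'_{i,n}\}$, $\{[x'_{j,n-1},x'_{i,n}]\}$ on the $\mathbf C$-side, and one must check each pairing; the Jacobi identity together with the weight (root) grading from Step 1 drastically cuts down the cases, since a bracket of weight $\mu+\nu$ can only be nonzero if $\mu+\nu\in\Phi^+$, and one then only has to identify it with the unique (up to scalar) basis vector of that weight space or show it vanishes. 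I would organize this as a short case analysis rather than a uniform formula, and flag that the weights appearing in $\mathfrak{g}_{\mathbf B}^++\mathfrak{g}_{\mathbf C}^+$ are exactly those of $\Phi^+=\Phi_{\mathbf{BC}}^+$, which is what makes the later root-space decomposition (Theorem~\ref{thm:root-space-decomposition}) come out right.
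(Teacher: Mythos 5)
Your overall strategy is the paper's: show that $\mathfrak{g}_{\mathbf B}^{+}+\mathfrak{g}_{\mathbf C}^{+}$ is stable under brackets by induction on bracket length, with the Jacobi identity and the relations (BC3) as the engine; your Step~1 grading argument (which the paper leaves implicit) is fine. But two points in Step~2 need attention. The lesser one: you set yourself far more cases than necessary. Since $\mathfrak{g}^{+}$ is generated by $x_{1},\ldots,x_{n},x'_{n}$, a subspace containing these generators and stable under $\mathrm{ad}(x_{i})$ for each generator already contains $\mathfrak{g}^{+}$ (left-normed brackets span the generated subalgebra), and since $[\mathfrak{g}_{\mathbf B}^{+},x_{i}]\subseteq\mathfrak{g}_{\mathbf B}^{+}$ for all $i\leq n$ while $[\mathfrak{g}_{\mathbf C}^{+},x_{i}]\subseteq\mathfrak{g}_{\mathbf C}^{+}$ for $i\leq n-1$ and for $x'_{n}$, the only brackets to check are $[\mathfrak{g}_{\mathbf B}^{+},x'_{n}]$ and $[\mathfrak{g}_{\mathbf C}^{+},x_{n}]$. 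This collapses the bookkeeping to a three-case check on the explicit basis of $\mathfrak{g}_{\mathbf B}^{+}$ (resp.\ $\mathfrak{g}_{\mathbf C}^{+}$), rather than all pairings of the two bases.

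The more serious point is that your device for killing most pairings --- ``a bracket of weight $\mu+\nu$ can only be nonzero if $\mu+\nu\in\Phi^{+}$'' --- is circular. A priori the weights occurring in $\mathfrak{g}^{+}$ are arbitrary nonnegative integer combinations of $\varepsilon_{i}-\varepsilon_{i+1}$, $\varepsilon_{n}$ and $2\varepsilon_{n}$; the assertion that only the weights in $\Phi^{+}$ survive is precisely the content of Theorem~\ref{thm:root-space-decomposition}, which is \emph{deduced from} Lemma~\ref{lem:BC=B+C}, not available when proving it. Weight-space vanishing may legitimately be invoked only inside the already-understood algebras $\mathfrak{b}_{\mathbf B}$ and $\mathfrak{b}_{\mathbf C}$, and that is the only way the paper uses it (e.g.\ $[x_{n},x_{i,n-2}]\in\mathfrak{g}_{\mathbf B}(\varepsilon_{n}+\varepsilon_{i}-\varepsilon_{n-1})=0$). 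For a genuinely mixed bracket such as $[x_{i,n},x'_{n}]$ with $1\leq i\leq n-2$, whose weight $\varepsilon_{i}+2\varepsilon_{n}$ lies outside $\Phi^{+}$, the vanishing must actually be \emph{proved} by a Jacobi computation reducing to (BC3); your fallback ``or show it vanishes'' is where all the real work lives, and it cannot be bypassed by the weight count.
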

\begin{proof}
We need to  prove $[\mathfrak{g}_{\mathbf B}^+, x'_n]\subseteq\mathfrak{g}_{\mathbf B}^++\mathfrak{g}_{\mathbf C}^+$ and $[\mathfrak{g}_{\mathbf C}^+, x_n]\subseteq\mathfrak{g}_{\mathbf B}^++\mathfrak{g}_{\mathbf C}^+$. We only show the first inclusion since the second one is similar. Precisely, we show $[x, x'_n]\in\mathfrak{g}_{\rm C}^+$ for basis elements $x$ of $\mathfrak{g}_{\mathbf B}^+$.

Case 1: $x=x_{i,j-1}$ with $1\leq i< j \leq n$. The element $[x_{i,j-1}, x_n']$ is generated by $x_1, \ldots, x_{n-1}, x'_{n}$, and hence belongs to $\mathfrak{g}_{\mathbf C}^+$.

\smallskip
Case 2: $x=x_{i,n}$ with $1\leq i\leq n$. We claim that $[x,x'_n]=0$. For $i=n-1$ and $i=n$ this is the condition (BC3). For $1\leq i\leq n-2$ we have
\begin{align*}
  [x_{i,n}, x_n']&=[[[x_{i,n-2},x_{n-1}],x_n],x_n']\\
&=[[x_{i,n-2},[x_{n-1},x_n],x_n']]+[[x_{n-1},[x_n,x_{i,n-2}]],x_n']\\
&=[x_{i,n-2},[[x_{n-1},x_n],x_n']]+[[x_{n-1},x_n],[x_n',x_{i,n-2}]]\\
&=0.
\end{align*}
For the third equality notice that $[x_n,x_{i,n-2}]\in\mathfrak{g}_{\mathbf B}(\varepsilon_n+(\varepsilon_i-\varepsilon_{n-1}))=0$. The last equality holds because (BC3) holds and $[x_n',x_{i,n-2}]\in\mathfrak{g}_{\mathbf C}(2\varepsilon_n+(\varepsilon_i-\varepsilon_{n-1}))=0$.

\smallskip
Case 3: $x=[x_{i,n},x_{j,n}]$ with $1\leq i< j \leq n$.  Then $[x,x'_n]=0$, due to Case 2.
\end{proof}

The following result is a consequence of Lemma~\ref{lem:BC=B+C}.
\begin{proposition}
\label{prop:basis}
The Lie algebra $\mathfrak{g}$ has a basis
\begin{itemize}
    \item[\rm (g1)] $x_{i,j}(1\leq i\leq j\leq n), [x_{i,n}, x_{j,n}](1\leq i<j\leq n), x'_{i,n}(1\leq i\leq n),  [x_{j,n-1},x'_{i,n}](1\leq i\leq j\leq n-1);$
    \item[\rm (g2)] $h_1, \ldots, h_n.$
  \end{itemize}
Consequently, the dimension of $\mathfrak{g}$ is $\frac{3n^2+3n}{2}$, and $\mathfrak{g}=\mathfrak{b}_B+\mathfrak{b}_C$.
\end{proposition}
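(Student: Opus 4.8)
The plan is to read the basis off the decomposition $\mathfrak{g}=\mathfrak{h}\oplus\mathfrak{g}^+$ and $\mathfrak{g}^+=\mathfrak{g}_{\mathbf B}^++\mathfrak{g}_{\mathbf C}^+$ from Lemma~\ref{lem:BC=B+C}, so everything comes down to understanding the intersection $\mathfrak{g}_{\mathbf B}^+\cap\mathfrak{g}_{\mathbf C}^+$ and pinning down which of the listed elements are genuinely linearly independent. First I would recall the weight-space decompositions of $\mathfrak{g}_{\mathbf B}^+$ and $\mathfrak{g}_{\mathbf C}^+$ recorded in Sections~\ref{sss:lie-algebra-type-B} and~\ref{sss:lie-algebra-type-C}: each weight space there is $1$-dimensional and is spanned by one of the iterated brackets $x_{i,j}$, $[x_{i,n},x_{j,n}]$, $x'_{i,n}$, $[x_{j,n-1},x'_{i,n}]$, etc. Since $\mathfrak{h}$ acts semisimply on $\mathfrak{g}^+$ (all the generators $x_i,x'_n$ are $\mathfrak{h}$-eigenvectors by (B2)/(C2)/(BC2), hence so are all brackets), the sum $\mathfrak{g}_{\mathbf B}^++\mathfrak{g}_{\mathbf C}^+$ decomposes along $\mathfrak{h}$-weights, and within each weight $\mu$ we are just adding two subspaces of the (at most) $1$- or $2$-dimensional weight space. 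So the bookkeeping is: list the weights occurring in $\Phi^+_{\mathbf B}\cup\Phi^+_{\mathbf C}=\Phi^+$, and for each determine whether the $\mathbf B$-contribution and the $\mathbf C$-contribution coincide, are independent, or one is zero.

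Concretely, I would organize the weights into the families of $\Phi^+$. For $\mu=\varepsilon_i-\varepsilon_j$ ($1\le i<j\le n$) both Borels contribute, but the remark after Lemma~\ref{lem:BC=B+C} notes $x'_{i,j}=x_{i,j}$ for $j\le n-1$, and for the case involving index $n$ one checks $x'_{i,n-1}$ and $x_{i,n-1}$ agree as well (both are $[\cdots[x_i,x_{i+1}],\ldots,x_{n-1}]$, built only from $x_1,\ldots,x_{n-1}$); so this family contributes exactly the $\binom n2$ vectors $x_{i,j}$ ($1\le i<j\le n$), equivalently $x_{i,j-1}$ reindexed — these are the $x_{i,j}$ with $i<j$ in list (g1), together covering $\varepsilon_i-\varepsilon_j$. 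For $\mu=\varepsilon_i$ only $\mathbf B$ contributes, giving $x_{i,n}=x_{i,i},\ldots$ — precisely the diagonal elements $x_{i,i}$ reindexed, i.e. the $n$ vectors $x_{i,n}$. For $\mu=2\varepsilon_i$ only $\mathbf C$ contributes, giving $x'_{i,n}$ for $1\le i\le n$ (with $x'_{n,n}$ the generator and $x'_{i,n}$ of weight $2\varepsilon_i$... — here I must be slightly careful and match the weights precisely against Section~\ref{sss:lie-algebra-type-C}, where $\mathfrak g_{\mathbf C}(2\varepsilon_i)$ is spanned by $[x'_{i,n-1},x'_{i,n}]$ for $i<n$ and $\mathfrak g_{\mathbf C}(\varepsilon_i+\varepsilon_n)$ by $x'_{i,n}$; so the genuinely "new" vectors beyond $\mathfrak g_{\mathbf B}^+$ are the $x'_{i,n}$ and the $[x_{j,n-1},x'_{i,n}]$). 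Finally for $\mu=\varepsilon_i+\varepsilon_j$ ($i<j\le n$) both Borels have a $1$-dimensional space, and the question is whether $[x_{i,n},x_{j,n}]$ (the $\mathbf B$ vector) and the corresponding $\mathbf C$ vector $[x_{j,n-1},x'_{i,n}]$-type element are proportional or independent. This is where the dimension count $\frac{3n^2+3n}{2}$ forces the answer: $|\Phi^+|$ for $\mathbf{BC}_n$ is $n^2+n$ (namely $2\binom n2$ roots $\varepsilon_i\pm\varepsilon_j$, plus $n$ roots $\varepsilon_i$, plus $n$ roots $2\varepsilon_i$), and adding $\dim\mathfrak h=n$ gives exactly $n^2+2n$, which does \emph{not} match $\frac{3n^2+3n}{2}$ — so in fact some weight spaces of $\mathfrak g^+$ must be $2$-dimensional, and the natural candidates are the $\varepsilon_i+\varepsilon_j$ with $i<j\le n-1$, where $[x_{i,n},x_{j,n}]\in\mathfrak g_{\mathbf B}^+$ and $[x_{j,n-1},x'_{i,n}]\in\mathfrak g_{\mathbf C}^+$ are linearly independent. (The weights $\varepsilon_i+\varepsilon_n$ are spanned in $\mathbf C$ by $x'_{i,n}$, already counted.) A direct count of list (g1) then reads: $\binom{n+1}{2}$ elements $x_{i,j}$ with $i\le j$ (this bundles $\varepsilon_i-\varepsilon_j$ and $\varepsilon_i$), $\binom n2$ elements $[x_{i,n},x_{j,n}]$, $n$ elements $x'_{i,n}$, and $\binom n2$ elements $[x_{j,n-1},x'_{i,n}]$ with $i\le j\le n-1$, totalling $\frac{3n^2+n}{2}$; adding the $n$ Cartan elements gives $\frac{3n^2+3n}{2}$, as claimed.

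So the proof proper has two halves. \emph{Spanning}: by Lemma~\ref{lem:BC=B+C}, $\mathfrak g=\mathfrak h\oplus\mathfrak g_{\mathbf B}^++\mathfrak g_{\mathbf C}^+$; and $\mathfrak g_{\mathbf B}^+$ is spanned by $\{x_{i,j-1}\}\cup\{x_{i,n}\}\cup\{[x_{i,n},x_{j,n}]\}$ (Section~\ref{sss:lie-algebra-type-B}), while $\mathfrak g_{\mathbf C}^+$ is spanned by $\{x'_{i,j-1}\}\cup\{[x'_{i,n-1},x'_{i,n}],x'_{n,n}\}\cup\{[x'_{j,n-1},x'_{i,n}],x'_{i,n}\}$ (Section~\ref{sss:lie-algebra-type-C}). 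Using $x'_{i,j}=x_{i,j}$ for $i\le j\le n-1$ and the fact that $x'_{i,n-1}$, $x_{i,n-1}$ etc.\ only involve $x_1,\ldots,x_{n-1}$ and so coincide, every spanning element of $\mathfrak g_{\mathbf C}^+$ that is not already in the list for $\mathfrak g_{\mathbf B}^+$ lies in the span of (g1); this shows (g1)$\cup$(g2) spans $\mathfrak g$. \emph{Independence}: the $\mathfrak h$-weights separate the four families of (g1) from each other and from (g2) (and within each family separate distinct $(i,j)$), so it suffices to check, for each fixed weight $\mu\in\Phi^+$, that the elements of (g1) of that weight are independent. For all $\mu$ except $\varepsilon_i+\varepsilon_j$ with $i<j\le n-1$ there is exactly one such element and it is nonzero (it spans a $1$-dimensional weight space of $\mathfrak g_{\mathbf B}^+$ or of $\mathfrak g_{\mathbf C}^+$ already, by the two Borels' structure). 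For $\mu=\varepsilon_i+\varepsilon_j$ with $i<j\le n-1$ we have the two elements $[x_{i,n},x_{j,n}]$ and $[x_{j,n-1},x'_{i,n}]$, and I must show they are linearly independent in $\mathfrak g$.

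I expect the linear independence of $[x_{i,n},x_{j,n}]$ and $[x_{j,n-1},x'_{i,n}]$ (for $i<j\le n-1$) to be the main obstacle, since it is the only point that is not pure bookkeeping and cannot be settled inside either Borel alone. The clean way to handle it is to produce a Lie algebra representation (or a concrete realization of $\mathfrak g$) in which these two vectors are visibly different — for instance, combine a faithful representation of $\mathfrak b_{\mathbf B}$ and one of $\mathfrak b_{\mathbf C}$ into a representation of $\mathfrak g$ (this is possible precisely because relations (BC1)–(BC3) are satisfied there), and evaluate. Alternatively, and perhaps more in the spirit of the paper, one can defer the full justification to Section~\ref{s:ringel-hall-lie-algebra}: the isomorphism $\mathfrak g\cong\tilde L(A)\otimes_{\mathbb Z}\mathbb C$ exhibits $\mathfrak g$ with a basis indexed by indecomposable $A$-modules, and the two bracket elements above correspond to two \emph{distinct} indecomposables (distinguished by their position in the AR-quiver even though they have the same dimension vector $\varepsilon_i+\varepsilon_j$), hence are independent; one then notes that the dimension count in the present proposition is consistent with, and in fact an upper bound matched by, that module count. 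For a self-contained argument here I would take the representation-theoretic route: exhibit operators realizing $x_1,\ldots,x_n,x'_n$ on a suitable finite-dimensional space, check (BC1)–(BC3), and compute that $[x_{i,n},x_{j,n}]$ and $[x_{j,n-1},x'_{i,n}]$ act by independent operators — for example one may take the direct sum of the defining (vector) representation of $\mathfrak{so}_{2n+1}$ restricted to $\mathfrak b_{\mathbf B}$ and the defining representation of $\mathfrak{sp}_{2n}$ restricted to $\mathfrak b_{\mathbf C}$, identified appropriately on the shared generators, and do the (short) matrix computation.
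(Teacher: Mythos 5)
Your proposal is correct and, for the spanning half, coincides with the paper's: the paper derives Proposition~\ref{prop:basis} directly from Lemma~\ref{lem:BC=B+C} with no further argument, i.e.\ from $\mathfrak{g}=\mathfrak{h}\oplus(\mathfrak{g}_{\mathbf B}^++\mathfrak{g}_{\mathbf C}^+)$ together with the weight-space bases of the two Borel subalgebras recorded in Section~\ref{ss:Lie-alg-type-B-and-C}. What you add --- and what the paper leaves implicit --- is the linear-independence half: since $\mathfrak{g}$ is presented by generators and relations, it is not automatic that the $\mathbf B$-vector and the $\mathbf C$-vector spanning a common weight are independent rather than proportional, and your idea of separating them in a representation is the right fix. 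It can be streamlined: the assignments $x'_n\mapsto 0$ (respectively $x_n\mapsto 0$) manifestly preserve (BC1)--(BC3), so there are surjections $\pi_{\mathbf B}\colon\mathfrak{g}\to\mathfrak{b}_{\mathbf B}$ and $\pi_{\mathbf C}\colon\mathfrak{g}\to\mathfrak{b}_{\mathbf C}$; under $(\pi_{\mathbf B},\pi_{\mathbf C})$ every listed element whose expression involves $x_n$ maps to $(\ast,0)$ and every one involving $x'_n$ maps to $(0,\ast)$, with $\ast\neq 0$ by the known structure of the respective Borel, so independence holds weight by weight --- no need to pass to $\mathfrak{so}_{2n+1}$ or $\mathfrak{sp}_{2n}$. (Your alternative of quoting Section~\ref{s:ringel-hall-lie-algebra} would be circular as the paper stands, since the proof of Theorem~\ref{thm:ringel-hall-lie-algebra}(a) invokes the rank $\frac{3n^2+n}{2}$ of $\mathfrak{n}^+$, i.e.\ this very proposition.) One bookkeeping slip to repair: the two-dimensional weight spaces are \emph{all} $\varepsilon_i+\varepsilon_j$ with $1\le i<j\le n$, that is $\binom{n}{2}$ of them --- matching your own count $\frac{3n^2+3n}{2}-(n^2+2n)=\binom{n}{2}$ --- not only those with $j\le n-1$; for $\mu=\varepsilon_i+\varepsilon_n$ the pair to separate is $[x_{i,n},x_n]$ and $x'_{i,n}$ (compare Theorem~\ref{thm:root-space-decomposition}), and the same projection argument handles it.
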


It follows from Lemma~\ref{lem:BC=B+C} that $\mathfrak{h}$ is a Cartan subalgebra of $\mathfrak{g}$. Then by Proposition~\ref{prop:basis} and the description of the root spaces in Section~\ref{ss:Lie-alg-type-B-and-C}, we obtain the following Cartan decomposition of $\mathfrak{g}$.
\begin{theorem}
\label{thm:root-space-decomposition} 
$\mathfrak{g}$ has the decomposition
$\mathfrak{g}=\bigoplus_{\mu\in\Phi^+\cup\{0\}}\mathfrak{g}(\mu)$, where $\mathfrak{g}(\mu)=\{x\in \mathfrak{g}\mid [h, x]=\mu(h)x, \forall h\in \mathfrak{h}\}$
satisfies
\begin{itemize}
  \item $\mathfrak{h}=\mathfrak{g}(0)$,
  \item  $\mathfrak{g}(\varepsilon_i-\varepsilon_j) (1\leq i< j \leq n)$ is $1$-dimensional with basis $x_{i,j-1}$;
  \item $\mathfrak{g}(\varepsilon_i) (1\leq i\leq n)$ is $1$-dimensional with basis $x_{i,n}$;
  \item $\mathfrak{g}(2\varepsilon_i) (1\leq i< n)$ is $1$-dimensional with basis $[x_{i,n-1},x'_{i,n}]$ and $\mathfrak{g}(2\varepsilon_n)$ is $1$-dimensional with basis $x'_{n,n}$;
  \item $\mathfrak{g}(\varepsilon_i+\varepsilon_j) (1\leq i< j < n)$ is $2$-dimensional with basis $[x_{i,n},x_{j,n}]$ and $[x_{j,n-1},x'_{i,n}]$, and $\mathfrak{g}(\varepsilon_i+\varepsilon_n) (1\leq i < n)$ is $2$-dimensional with basis $[x_{i,n},x_n]$ and $x'_{i,n}$;
\end{itemize}
\end{theorem}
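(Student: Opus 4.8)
The plan is to deduce the Cartan decomposition directly from the basis in Proposition~\ref{prop:basis} together with the root-space descriptions of $\mathfrak{g}_{\mathbf B}^+$ and $\mathfrak{g}_{\mathbf C}^+$ recalled in Section~\ref{ss:Lie-alg-type-B-and-C}. Since $\mathfrak{h}$ acts diagonalizably on each generator $x_1,\ldots,x_n,x'_n$ (with weights prescribed by (B2) and (C2) via (BC2)), it acts diagonalizably on all of $\mathfrak{g}$; hence $\mathfrak{g}=\bigoplus_{\mu\in\mathfrak{h}^*}\mathfrak{g}(\mu)$ and $\mathfrak{h}$ is a Cartan subalgebra. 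So the real content is to compute the weight of each basis element in (g1) and to check that the resulting multiset of weights is exactly $\Phi^+$ with the stated multiplicities, and that $\mathfrak{g}(0)=\mathfrak{h}$.

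First I would record the weights of the generators: $h_i$ acts on $x_j$ as in the matrix $(\varepsilon_i(h_j))$ for type $\mathbf B$, i.e.\ $x_i$ has weight $\varepsilon_i-\varepsilon_{i+1}$ for $1\le i\le n-1$ and $x_n$ has weight $\varepsilon_n$ (since $x_n=x_{n,n}$ in the type-$\mathbf B$ notation). For $x'_n$ one uses (BC2): $x'_n=x'_{n,n}$ lies in $\mathfrak{g}_{\mathbf C}(2\varepsilon_n)$, and the type-$\mathbf C$ matrix $(\varepsilon_i(h'_j))$ together with $h'_i=h_i$ $(i\le n-1)$, $h'_n=\tfrac12 h_n$, shows that $x'_n$ has weight $2\varepsilon_n$ as a functional on $\mathfrak{h}$. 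Then the bracket formulas give: $x_{i,j}=[\cdots[x_i,x_{i+1}],\ldots,x_j]$ has weight $\varepsilon_i-\varepsilon_{j+1}$ for $i\le j\le n-1$ and weight $\varepsilon_i$ for $j=n$; $[x_{i,n},x_{j,n}]$ has weight $\varepsilon_i+\varepsilon_j$; $x'_{i,n}=[\cdots[x'_i,x'_{i+1}],\ldots,x'_n]$ has weight $(\varepsilon_i-\varepsilon_n)+2\varepsilon_n=\varepsilon_i+\varepsilon_n$ for $i<n$ and $2\varepsilon_n$ for $i=n$; and $[x_{j,n-1},x'_{i,n}]$ has weight $(\varepsilon_j-\varepsilon_n)+(\varepsilon_i+\varepsilon_n)=\varepsilon_i+\varepsilon_j$ for $i<j\le n-1$, while for $i=j$ it is $(\varepsilon_i-\varepsilon_n)+(\varepsilon_i+\varepsilon_n)=2\varepsilon_i$. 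Collating, every element of $\Phi^+$ is hit: $\varepsilon_i-\varepsilon_j$ by $x_{i,j-1}$, $\varepsilon_i$ by $x_{i,n}$, $2\varepsilon_n$ by $x'_{n,n}$, $2\varepsilon_i$ $(i<n)$ by $[x_{i,n-1},x'_{i,n}]$, $\varepsilon_i+\varepsilon_j$ $(i<j<n)$ by the two independent vectors $[x_{i,n},x_{j,n}]$ and $[x_{j,n-1},x'_{i,n}]$, and $\varepsilon_i+\varepsilon_n$ $(i<n)$ by $[x_{i,n},x_n]$ and $x'_{i,n}$.

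Next I would verify that these weights are pairwise distinct except for the coincidences noted, so that the span of basis elements of a fixed weight is exactly $\mathfrak{g}(\mu)$ and has the claimed dimension; here I use the linear independence of the basis (g1)+(g2) from Proposition~\ref{prop:basis}. One must also check $\mathfrak{g}(0)=\mathfrak{h}$, i.e.\ no element of (g1) has weight $0$; this is immediate since every weight listed above is a nonzero element of $\mathbb{R}^n$. Finally I would observe that $[x_{i,n},x_n]$ and $[x_{i,n-1},x'_{i,n}]$ indeed lie in the span of (g1) — the former is $[x_{i,n},x_{n,n}]$, a type-$\mathbf B$ basis element, and the latter appears literally in (g1) — so nothing new is introduced.

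The main obstacle is bookkeeping rather than conceptual: one has to be careful about which vectors span the two-dimensional spaces $\mathfrak{g}(\varepsilon_i+\varepsilon_j)$, checking that $[x_{i,n},x_{j,n}]$ (coming from type $\mathbf B$) and $[x_{j,n-1},x'_{i,n}]$ (coming from type $\mathbf C$) are genuinely linearly independent and not accidentally proportional — this is guaranteed by Proposition~\ref{prop:basis}, which already lists both among a basis of $\mathfrak{g}$. Similarly one must confirm, using the relations (BC3) and the vanishing arguments in the proof of Lemma~\ref{lem:BC=B+C}, that no further basis vectors of weight $2\varepsilon_i$ or $\varepsilon_i+\varepsilon_j$ arise from mixed brackets, so that the dimensions are exactly $1$ and $2$ respectively and $\dim\mathfrak{g}=n+\sum_{\mu\in\Phi^+}\dim\mathfrak{g}(\mu)=\tfrac{3n^2+3n}{2}$, consistent with Proposition~\ref{prop:basis}.
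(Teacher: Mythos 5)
Your proposal is correct and follows exactly the route the paper takes (the paper gives no written proof beyond deriving the theorem from Proposition~\ref{prop:basis} together with the root-space descriptions of $\mathfrak{g}_{\mathbf B}^+$ and $\mathfrak{g}_{\mathbf C}^+$ in Section~\ref{ss:Lie-alg-type-B-and-C}); your weight computations for the basis elements in (g1) and the resulting multiplicity count matching $\Phi^+$ are the intended, and correct, bookkeeping. The only substantive point — linear independence of the two vectors spanning each $\mathfrak{g}(\varepsilon_i+\varepsilon_j)$ — is, as you note, already guaranteed by Proposition~\ref{prop:basis}.
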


\begin{corollary}
\label{cor:type-b-and-c-as-quotient}
The quotient of $\mathfrak{g}$ by the Lie ideal generated by $x'_n$ (respectively, by $x_n$) is isomorphic to $\mathfrak{b}_{\mathbf{B}}$ (respectively, $\mathfrak{b}_{\mathbf{C}}$).
\end{corollary}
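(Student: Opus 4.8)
\textbf{Proof proposal for Corollary~\ref{cor:type-b-and-c-as-quotient}.}

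The plan is to treat the two statements symmetrically and prove only the first, that $\mathfrak{g}/\langle x'_n\rangle\cong\mathfrak{b}_{\mathbf B}$, the second being obtained by interchanging the roles of $x_n$ and $x'_n$ (using $h'_n=\tfrac12 h_n$). Write $I=\langle x'_n\rangle$ for the Lie ideal of $\mathfrak{g}$ generated by $x'_n$. First I would identify $I$ explicitly using the basis of $\mathfrak{g}$ from Proposition~\ref{prop:basis} together with the root-space description in Theorem~\ref{thm:root-space-decomposition}. Since $x'_n=x'_{n,n}$ has weight $2\varepsilon_n$ and $\mathfrak{h}$ acts semisimply, $I$ is a sum of weight spaces; the candidate is the span of the ``$\mathbf C$-specific'' basis elements, namely $x'_{i,n}$ for $1\le i\le n$, the brackets $[x_{j,n-1},x'_{i,n}]$ for $1\le i\le j\le n-1$, and $x'_{n,n}$ itself. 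I would check this span is closed under bracketing by all generators $x_1,\dots,x_{n-1},x_n,x'_n$: bracketing with $x_1,\dots,x_{n-1}$ stays inside by the $\mathbf C$-type relations (Section~\ref{sss:lie-algebra-type-C}); bracketing with $x'_n$ lands in higher weight spaces which, by the analogue of Lemma~\ref{lem:BC=B+C}, are already among these elements or vanish; and bracketing with $x_n$ is exactly the content of the computations in the proof of Lemma~\ref{lem:BC=B+C} (Cases 1--3 show $[x,x'_n]\in\mathfrak{g}_{\mathbf C}^+$ for $x\in\mathfrak{g}_{\mathbf B}^+$, and dually $[\mathfrak{g}_{\mathbf C}^+,x_n]\subseteq\mathfrak{g}_{\mathbf B}^++\mathfrak{g}_{\mathbf C}^+$, with the $\mathbf C$-specific part landing in this span). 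This shows $I$ equals the stated span, and in particular $I\cap\mathfrak{h}=0$ and $I\cap\mathfrak{g}_{\mathbf B}^+=0$: comparing with the basis list (g1)--(g2) in Proposition~\ref{prop:basis}, the complement of $I$ is spanned precisely by the $\mathbf B$-type basis $\{x_{i,j},[x_{i,n},x_{j,n}]\}$ and $\{h_1,\dots,h_n\}$, which is a basis of $\mathfrak{b}_{\mathbf B}$ viewed inside $\mathfrak{g}$ via (BC1).

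Next I would produce the isomorphism. The composite $\mathfrak{b}_{\mathbf B}\hookrightarrow\mathfrak{g}\twoheadrightarrow\mathfrak{g}/I$ is a Lie algebra homomorphism; by the dimension/basis count just made it is bijective, hence an isomorphism. Equivalently, and perhaps cleaner for exposition, I would exhibit the inverse: define $\varphi\colon\mathfrak{g}\to\mathfrak{b}_{\mathbf B}$ on generators by $x_i\mapsto x_i$, $h_i\mapsto h_i$ ($1\le i\le n$), and $x'_n\mapsto 0$, and check it respects the defining relations (BC1), (BC2), (BC3) of $\mathfrak{g}$. Relations (BC1) hold by construction. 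For (BC2), which is the $\mathbf C$-type Serre package in the elements $x_1,\dots,x_{n-1},x'_n$ and $h_1,\dots,h_{n-1},\tfrac12 h_n$: every relation there either involves $x'_n$ and so is sent to a relation with a $0$ entry (trivially satisfied), or involves only $x_1,\dots,x_{n-1}$ and the $h_i$, in which case it is a consequence of the $\mathbf B$-type relations (B1)--(B3) restricted to $x_1,\dots,x_{n-1}$ — here one uses that the first $n-1$ simple roots and Cartan entries coincide for types $\mathbf B$ and $\mathbf C$, as visible in the two displayed matrices $(\varepsilon_i(h_j))$ and $(\varepsilon_i(h'_j))$. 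For (BC3), $[x_n,x'_n]\mapsto[x_n,0]=0$ and $[[x_{n-1},x_n],x'_n]\mapsto 0$. Thus $\varphi$ is well defined; it is surjective since it hits all generators of $\mathfrak{b}_{\mathbf B}$, and $I\subseteq\ker\varphi$ since $\varphi(x'_n)=0$ and $\ker\varphi$ is an ideal; finally $\ker\varphi=I$ by comparing dimensions using Proposition~\ref{prop:basis} ($\dim\mathfrak{g}-\dim I=\dim\mathfrak{g}-(\dim\mathfrak{g}-\dim\mathfrak{b}_{\mathbf B})=\dim\mathfrak{b}_{\mathbf B}$). Hence $\varphi$ induces $\mathfrak{g}/I\xrightarrow{\ \sim\ }\mathfrak{b}_{\mathbf B}$.

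The main obstacle is the well-definedness of $\varphi$ — equivalently, pinning down $I$ precisely rather than just an upper or lower bound. The subtlety is that the quotient must kill not only $x'_n$ but everything it generates, and a priori brackets such as $[x_n,x'_n]$, $[[x_{n-1},x_n],x'_n]$, or the various $[x_{i,j-1},x'_n]$ could either survive or impose extra collapsing; one must verify that after setting $x'_n=0$ the surviving elements $x_{i,j}$ and $[x_{i,n},x_{j,n}]$ remain linearly independent and satisfy no relations beyond those of $\mathfrak{b}_{\mathbf B}$. This is exactly where Lemma~\ref{lem:BC=B+C} and Proposition~\ref{prop:basis} do the heavy lifting: they guarantee that the $\mathbf B$-type elements form part of a basis of $\mathfrak{g}$ and that all ``mixed'' brackets involving $x'_n$ lie in the $\mathbf C$-type part of the basis, so that $I$ is exactly the complementary weight subspace. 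Once that bookkeeping is in place the argument is a routine relation-check plus a dimension count. The case of $x_n$ is entirely parallel, with $\mathbf B$ and $\mathbf C$, and $h_n$ and $\tfrac12 h_n$, exchanged.
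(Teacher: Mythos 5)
Your proposal is correct and follows essentially the same route as the paper: identify the ideal generated by $x'_n$ (resp.\ $x_n$) as the span of the ``$\mathbf C$-specific'' (resp.\ ``$\mathbf B$-specific'') basis elements of Proposition~\ref{prop:basis} using the bracket computations from the proof of Lemma~\ref{lem:BC=B+C}, and then conclude by comparing with the basis of $\mathfrak{b}_{\mathbf B}$ (resp.\ $\mathfrak{b}_{\mathbf C}$). Your additional explicit verification that the map $\varphi$ killing $x'_n$ respects the relations (BC1)--(BC3) is a more detailed writing-out of the paper's ``the desired result follows immediately,'' not a different argument.
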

\begin{proof}
By the proof of Lemma~\ref{lem:BC=B+C}, the Lie ideal generated by $x'_n$ has a basis $x'_{i,n}~(1\leq i\leq n)$, $[x_{j,n-1},x'_{i,n}]~(1\leq i\leq j\leq n-1)$. Similarly, the Lie ideal generated by $x_n$ has a basis $x_{i,n}~(1\leq i\leq n)$, $[x_{i,n}, x_{j,n}]~(1\leq i<j\leq n)$. The desired result follows immediately.
\end{proof}

\section{A Gabriel's theorem for the algebra $\Lambda(n-1,1,1)$}
\label{s:representation-theory}

In this section we establish a Gabriel's theorem for the gentle one-cycle algebra $\Lambda(n-1,1,1)$.

\medskip
Let $K=\mathbb{C}$. Consider the bound quiver $(Q,I)$
\begin{equation*}
\xymatrix@R=1ex{
1 \ar[r]&2\ar[r]&\cdots\ar[r]&n\ar@(ur,dr)^{\alpha}},\qquad \alpha^2.
\end{equation*}
The path algebra of this bound quiver is exactly the derived-discrete algebra $\Lambda(n-1,1,1)$ in the notation of \cite{BobinskiGeissSkowronski04}. In the rest of this paper we will denote this algebra by $A$ and we will identity an $A$-module with a representation of $(Q,I)$, i.e., a representation of $Q$ satisfying the relation $\alpha^2=0$. Let ${\rm rep}_K(Q,I)$ denote the category of finite-dimensional representations of $(Q,I)$.
Consider the following representations:
\begin{itemize}
  \item $U_{i,j}$ for $1 \leq j\leq i \leq n$:\\
\[\xymatrix@R=1ex{
0 \ar[r]^0&\cdots\ar[r]^0&0 \ar[r]^0&K\ar[r]^1&\cdots\ar[r]^1&K \ar[r]^{e_1}&K^2\ar[r]^{\rm I}&\ldots\ar[r]^{\rm I}&K^2\ar@(ur,dr)^{\alpha}\\
&&&\bullet\raisebox{-4mm}{\hspace{-2mm}$j$}\ar[r]&\cdots\ar[r]&\bullet \ar[r]&\bullet\ar[r] \raisebox{-4mm}{\hspace{-2mm}$i$}&\cdots\ar[r]&\bullet\ar@(r,r)[d]\raisebox{-4mm}{\hspace{-2mm}$n$}\\
&&&&&&\bullet\ar[r]&\cdots\ar[r]&\bullet}\\\]
  \item $U_{i,j}$ for $1 \leq i < j \leq n$:\\
\[\xymatrix@R=1ex{
0 \ar[r]^0&\cdots\ar[r]^0&0 \ar[r]^0&K\ar[r]^1&\ldots\ar[r]^1&K \ar[r]^{e_2}&K^2\ar[r]^{\rm I}&\ldots\ar[r]^{\rm I}&K^2\ar@(ur,dr)^{\alpha}\\
&&&&&&\bullet\ar[r]&\cdots\ar[r]&\bullet\ar@(r,r)[d]\\
&&&\bullet\raisebox{4mm}{\hspace{-2mm}$i$}\ar[r]&\ldots\ar[r]&\bullet \ar[r]&\bullet\ar[r] \raisebox{4mm}{\hspace{-2mm}$j$}&\cdots\ar[r]&\bullet\raisebox{4mm}{\hspace{-2mm}$n$}}\\\]
  \item $V_i$ for $1 \leq i \leq n$:\\
\[\xymatrix@R=1ex{
0 \ar[r]^0&\cdots\ar[r]^0&0 \ar[r]^0&K\ar[r]^1&\cdots\ar[r]^1&K\ar@(ur,dr)^{0}\\
&&&\bullet\raisebox{4mm}{\hspace{-2mm}$i$}\ar[r]&\cdots\ar[r]&\bullet\raisebox{4mm}{\hspace{-2mm}$n$}}\\\]
  \item $W_{i,j}$ for $1 \leq i \leq j <n$:\\
\[\xymatrix@R=1ex{
0 \ar[r]^0&\cdots\ar[r]^0&0 \ar[r]^0&K\ar[r]^1&\cdots\ar[r]^1&K\ar[r]^{0}&0\ar[r]^{0}&\cdots\ar[r]^{0}&0\ar@(ur,dr)^{0}\\
&&&\bullet\raisebox{4mm}{\hspace{-2mm}$i$}\ar[r]&\cdots\ar[r]&\bullet\raisebox{4mm}{\hspace{-2mm}$j$}}.\\\]
\end{itemize}
\noindent Here, $e_1 =\left(
              \begin{array}{c}
                1 \\
                0 \\
              \end{array}
            \right)$
, $e_2 =\left(
              \begin{array}{c}
                0 \\
                1 \\
              \end{array}
            \right)$, $\rm{I}=\left(
                   \begin{array}{cc}
                     1 & 0 \\
                     0 & 1 \\
                   \end{array}
                 \right)$, and $\alpha=\left(
              \begin{array}{cc}
                0 & 0 \\
                1 & 0 \\
              \end{array}
            \right)
$.

\begin{theorem}
[{\cite[Theorem 3.3]{BoosReineke12}}]
The representations $U_{i,j}$, $V_i$ and $W_{i,j}$ form a complete set of representatives of the isoclasses of indecomposable objects in ${\rm rep}_K(Q,I)$.
\end{theorem}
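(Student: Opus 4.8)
The plan is to identify $A=\Lambda(n-1,1,1)$ as a string algebra and then to read off its indecomposables from the Butler--Ringel (Wald--Waschb\"usch) classification, matching the resulting modules against the stated list. First I would check that $A$ is a \emph{gentle} algebra, hence special biserial and in particular a string algebra: at every vertex at most two arrows start and at most two end (the only subtle vertex is $n$, where $\alpha$ and $a_{n-1}\colon n-1\to n$ come in while only $\alpha$ goes out), the ideal $I=(\alpha^2)$ is generated by a path of length two, and the remaining local conditions at the loop are immediate. Consequently, up to isomorphism the indecomposable $A$-modules are exactly the string modules $M(w)$, for strings $w$ taken modulo inversion, together with the band modules $M(b,\lambda,m)$ for bands $b$ modulo rotation and inversion, $\lambda\in K\setminus\{0\}$, $m\geq 1$.

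The next step is to list the strings. Write $a_i\colon i\to i+1$ for the linear arrows; a string is a reduced walk avoiding the subwords $\alpha\alpha$ and $\alpha^{-1}\alpha^{-1}$. Since deleting $\alpha$ leaves the tree $1\to2\to\cdots\to n$, any reduced closed walk at a vertex must use $\alpha^{\pm1}$; and between two consecutive occurrences of $\alpha^{\pm1}$ in a string there would have to be a (possibly empty) reduced walk from $n$ to $n$ using no $\alpha$, hence the trivial one, so the two occurrences would be adjacent, which is impossible ($\alpha\alpha$ and $\alpha^{-1}\alpha^{-1}$ are relations, $\alpha\alpha^{-1}$ and $\alpha^{-1}\alpha$ are not reduced). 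Thus every string contains $\alpha^{\pm1}$ at most once, and up to inversion it is either (i) a monotone walk $a_{j-1}\cdots a_i$ supported on an interval $[i,j]$ with $1\leq i\leq j\leq n$, or (ii) of the form $a_i^{-1}\cdots a_{n-1}^{-1}\,\alpha\,a_{n-1}\cdots a_j$ with $1\leq i,j\leq n$, where the two ``arms'' are forced to be the unique monotone paths from $j$ up to $n$ and from $n$ back down to $i$, because at $n$ the only outgoing arrow is $\alpha$ and the only other incoming arrow is $a_{n-1}$ (an arm is empty when its index equals $n$). Computing the dimension vector of the associated string module, the interval walks in (i) give the $W_{i,j}$ when $j<n$ and the $V_i$ when $j=n$, while the walks in (ii) give precisely the $U_{i,j}$, the two displayed families of $U$'s corresponding to the cases $i<j$, $i=j$, $i>j$. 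Finally, a band would be a non-periodic reduced cyclic walk containing both an arrow and a formal inverse; the only candidate cyclic walks at $n$ either use $\alpha$ twice or contain a backtrack $a_ka_k^{-1}$, so there are no bands.

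The bulk of the work is the bookkeeping in the second step: one must fix conventions for string modules and for the forbidden walks at the loop, match the explicit matrices $e_1$, $e_2$, $\mathrm{I}$, $\alpha$ of the displayed representations with the canonical string-module structure, and confirm that inequivalent strings yield pairwise non-isomorphic modules, so that the list $\{U_{i,j},V_i,W_{i,j}\}$ is free of repetitions. Granting this, completeness of the list is exactly the Butler--Ringel theorem. As a consistency check one may count $\binom{n}{2}+n+n^2=\tfrac{3n^2+n}{2}$ indecomposables, in agreement with $\dim\mathfrak{g}^+$ coming from Proposition~\ref{prop:basis}.
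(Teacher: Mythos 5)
The paper offers no proof of this statement: it is quoted verbatim from Boos--Reineke \cite[Theorem 3.3]{BoosReineke12}, so there is no in-paper argument to compare yours against. That said, your proposal is a correct and essentially self-contained proof along the standard route. The algebra $A=KQ/(\alpha^2)$ is indeed gentle, hence a string algebra (the monomial relation $\alpha^2$ and the arrow counts at the loop vertex are exactly as you say), so Butler--Ringel applies and reduces everything to enumerating strings and bands. Your key combinatorial observation --- that a reduced walk can contain $\alpha^{\pm1}$ at most once, because the segment between two consecutive occurrences would be a reduced closed walk at $n$ in the tree $1\to\cdots\to n$, hence trivial, forcing the forbidden adjacencies $\alpha\alpha$, $\alpha^{-1}\alpha^{-1}$, $\alpha^{\pm1}\alpha^{\mp1}$ --- is correct, and the same observation correctly rules out bands. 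The resulting string inventory matches the list: interval walks give $W_{i,j}$ ($j<n$) and $V_i$ ($j=n$), and the walks $a_i^{-1}\cdots a_{n-1}^{-1}\,\alpha\,a_{n-1}\cdots a_j$ give the $U_{i,j}$, with the two orientations of the arms accounting for $U_{i,j}$ versus $U_{j,i}$ (note these have equal dimension vectors, so the final identification with the displayed matrices genuinely requires the top/socle bookkeeping you defer, not just dimension counts --- but that and the pairwise non-isomorphism of distinct strings are part of the Butler--Ringel package). The count $\tfrac{3n^2+n}{2}$ agrees with the rank of $L(A)$ used later in the paper. In short: where the paper imports the classification as a black box, you supply a proof that also makes transparent \emph{why} each indecomposable meets the loop at most once, which is implicitly used throughout Section 4.
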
\label{repn}

The simple representations in ${\rm rep}_K(Q,I)$ are $\{S_i=W_{i,i}|1 \leq i \leq n-1\}\cup \{S_n=V_{n}\}$ and $P_1=U_{n,1},\ldots, P_n=U_{n,n}$ form a complete set of pairwise non-isomorphic indecomposible projective representations of ${\rm rep}_K(Q,I)$. Thus the Cartan matrix of the algebra $A$ in the sense of \cite[Defintion 3.7]{AssemSimsonSkowronski06} is the $n \times n$-matrix
\[C_A=\left(
 \begin{array}{ccccc}
  1 & 0 & 0 & \ldots   & 0 \\
  1 & 1 & \ddots & \ddots & \vdots \\
  \vdots & \vdots & \ddots & \ddots   & 0 \\
  1 & 1 & \ldots   & 1 & 0 \\
  2 & 2 &  \ldots  & 2 & 2 \\
  \end{array}
  \right).\]

Recall that \textbf{the dimension vector} of a representation $M$ of the quiver $Q$ is defined to be the vector ${\rm \underline{dim}} M=({\rm dim}_KM_1, \ldots, {\rm dim}_KM_n).$
Below we define a symmetric bilinear form on the Grothendieck group $K_0(\mathrm{rep}_K(Q,I))$ with integer values.. In the appendix we will explain that it is a modified symmetric Euler form.
\begin{definition}
\label{def:bilinear-form}
For any $M$ and $N$ in ${\rm rep}_K(Q,I)$, define 
\[
(M, N)_A=({\rm \underline{dim}} M)(C_A^{-1}+C_A^{-t})({\rm \underline{dim}} N)^t.
\]
\end{definition}

Then we obtain a similar result as Gabriel's theorem \cite{Gabriel72}:
\begin{theorem}\label{Bij}
\label{thm:Gabriel's-theorem}
\begin{itemize}
  \item[\rm (a)] Let $D$ be the set of dimension vectors of indecomposible representations in ${\rm rep}_K(Q,I)$, and let $\alpha_i={\rm \underline{dim}} S_i$. There is a bijection: $D\longrightarrow \Phi^+$ which takes $\alpha_i$ to $\varepsilon_i-\varepsilon_{i+1} (1\leq i<n)$, $\alpha_n$ to $\varepsilon_n$ and which preserves addition. From now on, we identify $D$ with $\Phi^+$.
  \item[\rm (b)] The fiber of $\alpha\in \Phi^+$ under $\rm \underline{dim}$ is:
  \[\left\{
     \begin{array}{ll}
       W_{i,j-1}, & \hbox{if $\alpha=\varepsilon_i-\varepsilon_j~(1\leq i<j\leq n)$;} \\
       V_i, & \hbox{if $\alpha=\varepsilon_i~(1\leq i\leq n)$;} \\
       U_{i,i}, & \hbox{if $\alpha=2\varepsilon_i~(1\leq i\leq n)$;} \\
       U_{i,j}\text{ and } U_{j,i}, & \hbox{if $\alpha=\varepsilon_i+\varepsilon_j~(1\leq i<j\leq n)$.}
     \end{array}
   \right.\]
  \item[\rm (c)] For any $M$ and $N$ in ${\rm rep}_K(Q,I)$, we have $( M,  N)_A=({\rm \underline{dim}}M  , {\rm \underline{dim}} N)$.
\end{itemize}
\end{theorem}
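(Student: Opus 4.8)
The plan is to prove the three parts in order, with part (a) being essentially a bookkeeping exercise once the classification of indecomposables is in hand, and parts (b) and (c) following from explicit dimension-vector computations.

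\textbf{Part (a).} First I would read off the dimension vectors of the indecomposables $U_{i,j}$, $V_i$, $W_{i,j}$ from the representations displayed above. Concretely: $\dimv W_{i,j} = \varepsilon_i + \varepsilon_{i+1} + \cdots + \varepsilon_j$ for $1\le i\le j<n$ (where I abuse notation writing $\varepsilon_k$ for the $k$-th standard basis vector of $\Z^n$); $\dimv V_i = \varepsilon_i + \cdots + \varepsilon_n$ for $1\le i\le n$; and $\dimv U_{i,j}$ for the ``staircase'' representations equals $(\varepsilon_{\min(i,j)} + \cdots + \varepsilon_n) + (\varepsilon_{\max(i,j)} + \cdots + \varepsilon_n)$, i.e. it is $\dimv V_{\min(i,j)} + \dimv V_{\max(i,j)}$. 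Next I would compute, under the linear map $\Z^n \to \Z^n$ sending the $k$-th standard basis vector of the source to $\varepsilon_k - \varepsilon_{k+1}$ for $k<n$ and the $n$-th to $\varepsilon_n$, the images: $\dimv W_{i,j} \mapsto \varepsilon_i - \varepsilon_{j+1}$, $\dimv V_i \mapsto \varepsilon_i$, and $\dimv U_{i,j} \mapsto \varepsilon_{\min(i,j)} + \varepsilon_{\max(i,j)}$ (which is $2\varepsilon_i$ when $i=j$). Comparing with the list of $\Phi^+ = \Phi^+_{\mathbf{BC}}$ recalled in Section~\ref{ss:root-sysmtem-type-bc}, this map is a bijection from $D$ onto $\Phi^+$; it preserves addition because it is the restriction of a linear map and $\alpha_i = \dimv S_i = \dimv W_{i,i}$ for $i<n$ maps to $\varepsilon_i - \varepsilon_{i+1}$ while $\alpha_n = \dimv V_n = \varepsilon_n$ maps to $\varepsilon_n$, matching $\Delta = \Delta_{\mathbf{BC}}$.

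\textbf{Part (b).} This is immediate from the computation in part (a): inverting the correspondence, the preimage of $\varepsilon_i - \varepsilon_j$ is the unique indecomposable with that dimension vector, namely $W_{i,j-1}$; the preimage of $\varepsilon_i$ is $V_i$; the preimage of $2\varepsilon_i$ is $U_{i,i}$; and the preimage of $\varepsilon_i + \varepsilon_j$ with $i<j$ consists of exactly the two indecomposables $U_{i,j}$ and $U_{j,i}$, since these are the only two staircase modules with that dimension vector. One only needs to check that no two distinct indecomposables from \emph{different} families share a dimension vector, which is clear by inspection of the supports and the entries (the $U$'s have a vertex of dimension $2$, the $V$'s and $W$'s do not, and $V_i$ is supported all the way to vertex $n$ while $W_{i,j}$ is not).

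\textbf{Part (c).} Since both sides are $\Z$-bilinear in $M$ and $N$ through the dimension vectors, it suffices to verify $(M,N)_A = (\dimv M, \dimv N)$ on a spanning set of dimension vectors, e.g. on the simple roots $\alpha_1, \dots, \alpha_n$, or more efficiently on the vectors $\dimv V_i$ (equivalently on $\varepsilon_1, \dots, \varepsilon_n$, which are the images of the $\dimv V_i$ under a triangular change of basis). Concretely I would compute $C_A^{-1}$ explicitly — it is lower triangular with $1$'s on the diagonal for rows $1,\dots,n-1$, a $\tfrac12$ in the $(n,n)$ entry, $-1$ just below the diagonal in rows $2,\dots,n-1$, and appropriate entries in the last row — then form $C_A^{-1} + C_A^{-t}$ and check that conjugating this matrix by the linear change of coordinates from part (a) yields the Gram matrix of the standard inner product $(-,-)$ on $\R^n$ restricted to the root lattice (i.e. the identity in the $\varepsilon$-basis, so that $(\varepsilon_i,\varepsilon_j) = \delta_{ij}$). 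Equivalently, and perhaps more transparently, I would check directly that $(\alpha_i,\alpha_j)_A$ reproduces the Cartan-matrix-like pairing dictated by $\mathbf{BC}_n$: $(\alpha_i,\alpha_i)_A = 2$ for $i<n$, $(\alpha_n,\alpha_n)_A = 1$, $(\alpha_i,\alpha_{i+1})_A = -1$ for $i<n-1$, $(\alpha_{n-1},\alpha_n)_A = -1$, and $0$ otherwise — which matches $(\varepsilon_i-\varepsilon_{i+1}, \varepsilon_j-\varepsilon_{j+1})$ etc. Then bilinearity finishes the proof; since part (a) of the theorem guarantees the dimension-vector map extends to an isomorphism of the relevant lattices, the identity $(M,N)_A = (\dimv M, \dimv N)$ propagates from the spanning set to all of $D$, hence to all representations.

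\textbf{Main obstacle.} None of the steps is conceptually deep; the only place where care is genuinely needed is the explicit inversion of $C_A$ and the matrix computation in part (c) — in particular keeping track of the factor $\tfrac12$ coming from the last row/column (the $2$'s in $C_A$), which is exactly the source of the $\mathbf{BC}$ rather than $\mathbf{B}$ or $\mathbf{C}$ behaviour and must be handled correctly so that $(\alpha_n,\alpha_n)_A$ comes out to $1$ and not $2$. I would double-check this by evaluating $C_A^{-1}$ against small cases ($n=1,2$) before asserting the general formula.
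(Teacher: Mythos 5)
Your proposal is correct and follows essentially the same route as the paper: parts (a) and (b) are treated as direct consequences of reading off the dimension vectors from the Boos--Reineke classification, and part (c) reduces by bilinearity to checking that $C_A^{-1}+C_A^{-t}$ equals the Gram matrix of $(-,-)$ in the basis $\alpha_1,\ldots,\alpha_n$, which is exactly the one computation the paper records (including the crucial $1$ rather than $2$ in the $(n,n)$ entry coming from the factor $\tfrac12$ in $C_A^{-1}$). No gaps.
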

\begin{proof}
We only need to prove (c). This is because the matrix
\[C_A^{-1}+C_A^{-t}=\left(
 \begin{array}{ccccc}
  2 & -1 &  &    &  \\
  -1 & 2 & \ddots &  &  \\
   & \ddots & \ddots & \ddots   &  \\
   &  & \ddots   & 2 & -1 \\
   &  &    & -1 & 1 \\
  \end{array}
  \right)\]
is exactly the matrix of the bilinear form $(-,-)$ with respect to the basis $\Delta=\{\alpha_1,\ldots,\alpha_n\}$.
\end{proof}

\section{Riedtmann's Lie algebra for $\Lambda(n-1,1,1)$}
\label{s:ringel-hall-lie-algebra}

In this section we compute an extended version of Riedtmann's Lie algebra of $\Lambda(n-1,1,1)$ and show that it is isomorphic to the Lie algebra $\mathfrak{g}$ introduced in Section~\ref{ss:lie-algebra-type-bc}.

\subsection{The extended Riedtmann's Lie algebra}

We adopt the notation in Section~\ref{s:representation-theory}.
Let $\mathcal{P}(A)$ be a set of representatives of all isomorphism classes of objects of ${\rm rep}_K(Q,I)$. Let $\mathcal{H}(A)$ be the free $\mathbb{Z}$-module with basis $\{u_X\mid X\in \mathcal{P}(A)\}$. According to \cite{Riedtmann94} (see also \cite{Lusztig91b, Schofieldb}), the following multiplication makes $\mathcal{H}(A)$ a $\mathbb{Z}$-algebra with identity $u_{0}$:
\[u_X\cdot u_Y=\sum_{Z\in \mathcal{P}(A)}\chi(V(X,Y;Z))u_Z,\]
where $V(X,Y;Z)=\{0\subseteq Z_1 \subseteq Z\mid Z_1 \cong Y, Z/Z_1 \cong X\}$\footnote{This is opposite to Riedtmann's multiplication.} and $\chi(V(X,Y;Z))$ is the Euler characteristic of $V(X,Y;Z)$. This sum is finite because $\chi(V(X,Y;Z))\neq 0$ implies that there is a short exact sequence $0\rightarrow X\rightarrow Z\rightarrow Y\rightarrow 0$, and hence $\underline{\rm dim}Z=\underline{\rm dim}X+\underline{\rm dim}Y$.

Let $\mathcal{I}(A)=\{U_{i,j}\mid 1\leq j\leq i\leq n\}\cup \{U_{i,j}\mid 1\leq i< j\leq n\}\cup \{V_{i}\mid 1\leq i\leq n\}\cup \{W_{i,j}\mid 1\leq i\leq j< n\}$, and let $L(A)$ be the $\mathbb{Z}$-submodule of $\mathcal{H}(A)$ spanned by $\{u_X\mid X\in \mathcal{I}(A)\}$. Then according to \cite[Corollary 2.3]{Riedtmann94} , $L(A)$ is a Lie subalgebra of $\mathcal{H}(A)$ with respect to the commutator $[u_X, u_Y]=u_X\cdot u_Y-u_Y\cdot u_X$.  In the rest, for $X\in {\rm rep}_K(Q,I)$, we use $X$ instead of $u_X$ for simplicity.

\smallskip
Recall that there is a symmetric bilinear form $(-,-)_A$ on the Grothendieck group $K_0(\mathrm{rep}_K(Q,I))$ with integer values. For a representation $M\in\mathrm{rep}_K(Q,I)$ denote its image in $K_0(\mathrm{rep}_K(Q,I))$ by $h_M$. Let $\tilde{L}(A)=K_0(\mathrm{mod} A)\oplus L(A)$ and extend the Lie bracket on $L(A)$ by
\[
[h_M,h_N]=0,~ [h_M,N]=(M,N)_A N=-[N,h_M]
\]
for $M,N\in\mathrm{rep}_K(Q,I)$. Then $\tilde{L}(A)$ becomes a Lie algebra.

\subsection{The main result}

\medskip
Let $\mathfrak{n}$ be the $\mathbb{Z}$-submodule of $\mathfrak{g}$ spanned by the basis elements in Proposition~\ref{prop:basis} and $\mathfrak{n}^+$ the one spanned by the elements in (g1). They are Lie subalgebras of $\mathfrak{g}$, $\mathfrak{n}\otimes_{\mathbb{Z}}\mathbb{C}=\mathfrak{g}$ and $\mathfrak{n}^+\otimes_{\mathbb{Z}}\mathbb{C}=\mathfrak{g}^+$. The following is the main result of this paper. Let $S_1=W_{1,1},\ldots,S_{n-1}=W_{n-1,n-1},S_n=V_n,S'_n=U_{n,n}$.

\begin{theorem}
\label{thm:ringel-hall-lie-algebra}
\begin{itemize}
\item[\rm (a)]  There is an injective homomorphism $\mathfrak{n}^+\rightarrow L(A)$ of Lie algebras over $\mathbb{Z}$ which sends $x_1, \ldots, x_{n-1}, x_n, x'_n$ to $S_1,\ldots,S_{n-1},S_n,S'_n$. By extending the scalars, it becomes an isomorphism $\mathfrak{n}^+\otimes_{\mathbb{Z}}\mathbb{Z}[\frac{1}{2}]\overset{\cong}\longrightarrow L(A)\otimes_{\mathbb{Z}}\mathbb{Z}[\frac{1}{2}]$ of Lie algebras over $\mathbb{Z}[\frac{1}{2}]$.
\item[\rm (b)] The homomorphism $\mathfrak{n}^+\rightarrow L(A)$ in (a) extends to an injective homomorphism $\mathcal{U}(\mathfrak{n}^+)\rightarrow\mathcal{H}(A)$ of $\mathbb{Z}$-algebras. By extending the scalars, it becomes an isomorphism $\mathcal{U}(\mathfrak{n}^+)\otimes_{\mathbb{Z}}\mathbb{Q}\overset{\cong}\longrightarrow \mathcal{H}(A)\otimes_{\mathbb{Z}}\mathbb{Q}$ of  $\mathbb{Q}$-algebras.
\item[\rm (c)] The homomorphism $\mathfrak{n}^+\longrightarrow L(A)$  in (a) extends to an injective homomorphism $\varphi\colon\mathfrak{n}\longrightarrow \tilde{L}(A)$ of Lie algebras over $\mathbb{Z}$, by $h_i\mapsto h_{S_i}$ ($1\leq i\leq n-1$) and $h_n\mapsto 2h_{S_n}$. By extending the scalars, it becomes an isomorphism $\mathfrak{n}\otimes_{\mathbb{Z}}\mathbb{Z}[\frac{1}{2}]\overset{\cong}\longrightarrow \tilde{L}(A)\otimes_{\mathbb{Z}}\mathbb{Z}[\frac{1}{2}]$ of Lie algebras over $\mathbb{Z}[\frac{1}{2}]$.
\end{itemize}
\end{theorem}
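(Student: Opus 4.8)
The plan is to build the map in the order (a) $\to$ (c) $\to$ (b), since (c) is essentially (a) plus the easy extension to the Cartan part, and (b) follows from (a) by passing to universal enveloping algebras and using a PBW-dimension count.

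For part (a), the first step is to verify that the assignment $x_i\mapsto S_i$ ($1\le i\le n-1$), $x_n\mapsto S_n$, $x'_n\mapsto S'_n$ actually defines a Lie algebra homomorphism $\mathfrak{n}^+\to L(A)$ over $\mathbb{Z}$. By the defining presentation of $\mathfrak{g}$ (hence of $\mathfrak{n}^+$), it suffices to check that the images satisfy the relations (B3), (C3) and (BC3), i.e. the appropriate Serre relations among $S_1,\dots,S_{n-1},S_n$, the Serre relations among $S_1,\dots,S_{n-1},S'_n$, and $[S_n,S'_n]=0$, $[[S_{n-1},S_n],S'_n]=0$. This is where I would invoke the explicit computation of the Lie bracket of $L(A)$ (the paper's Proposition~\ref{prop:lie-brackets-of-L(A)}), which in turn rests on the classification of indecomposables and the computation of the relevant Euler characteristics $\chi(V(X,Y;Z))$. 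Concretely, I expect $[S_i,S_j]$ to vanish when the vertices are non-adjacent, $[S_i,S_{i+1}]$ to be $\pm W_{i,i+1}$ (or the analogous $U$ at the end), etc., exactly mirroring the formulas for the bracket of $x_{i,j}$, $x'_{i,j}$ in Sections~\ref{sss:lie-algebra-type-B} and~\ref{sss:lie-algebra-type-C}. Granting the bracket formulas, the Serre relations and (BC3) become a finite, mechanical check.

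Next, for injectivity and the isomorphism after inverting $2$: the natural approach is to show that the images in $L(A)$ of the basis (g1) of $\mathfrak{n}^+$ are, up to signs and powers of $2$, exactly the standard basis vectors $u_X$ for $X\in\ci(A)$. Using the bracket formulas one identifies: $x_{i,j}\mapsto \pm W_{i,j}$ and $x_{i,n}\mapsto \pm V_i$; the bracket $[x_{i,n},x_{j,n}]\mapsto \pm U_{i,j}$ (say with $i<j$) and $[x_{j,n-1},x'_{i,n}]\mapsto \pm U_{j,i}$, while $x'_{i,n}\mapsto \pm U_{i,i}$ for the "diagonal" and $x'_{n,n}\mapsto\pm$ the appropriate $U$; here the description of the root spaces in Theorem~\ref{thm:root-space-decomposition} tells us precisely which indecomposable sits in which root space, and Theorem~\ref{thm:Gabriel's-theorem}(b) matches this with the fibers of $\dimv$. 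Since $\dim_{\mathbb{C}}\mathfrak{g}^+ = \frac{3n^2+n}{2} = |\ci(A)|$ by Proposition~\ref{prop:basis} and the count of indecomposables, a $\mathbb{Z}$-linear map carrying a basis to a set of scalar multiples (by $\pm 2^{k}$) of a basis is injective, and becomes bijective after tensoring with $\mathbb{Z}[\tfrac12]$. The factors of $2$ are unavoidable and genuinely appear because the Cartan form of $A$ (Definition~\ref{def:bilinear-form}, with the bottom-right entry $1$ rather than $2$) is the $\mathbf{BC}_n$ form, not an integrally unimodular one; this matches $h'_n=\tfrac12 h_n$ in (BC2).

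Part (c) is then immediate: extend $\varphi$ on $\mathfrak{h}$ by $h_i\mapsto h_{S_i}$ ($i<n$), $h_n\mapsto 2h_{S_n}$, and check compatibility of brackets, i.e. $[h_i, x]=\mu(h_i)x$ on the $\mathfrak{g}$-side matches $[h_{S_i}, X]=(S_i,X)_A X$ on the $\tilde L(A)$-side — but this is exactly the content of Theorem~\ref{thm:Gabriel's-theorem}(c) together with the matrix $(\varepsilon_i(h_j))$ displayed in Section~\ref{sss:lie-algebra-type-B}, after the rescaling $h_n\mapsto 2h_{S_n}$ accounts for the last row. Injectivity and the $\mathbb{Z}[\tfrac12]$-isomorphism follow from part (a) together with the fact that $\{h_{S_1},\dots,h_{S_{n-1}},h_{S_n}\}$ is a $\mathbb{Z}$-basis of $K_0(\mod A)$. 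Finally, part (b): the Lie homomorphism $\mathfrak{n}^+\to L(A)\subseteq\mathcal{H}(A)$ induces an algebra homomorphism $\mathcal{U}(\mathfrak{n}^+)\to\mathcal{H}(A)$ by the universal property; after $\otimes_{\mathbb{Z}}\mathbb{Q}$, PBW gives a basis of $\mathcal{U}(\mathfrak{n}^+)_{\mathbb{Q}}$ indexed by monomials in a basis of $\mathfrak{n}^+$, which maps onto the monomial basis of $\mathcal{H}(A)_{\mathbb{Q}}$ coming from the (filtered) structure of $\mathcal H(A)$ — one checks the associated graded map is the identity-type isomorphism, so the map is bijective over $\mathbb{Q}$, whence injective over $\mathbb{Z}$.

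The main obstacle is the very first step of part (a): establishing that the proposed images satisfy exactly the relations (B3), (C3), (BC3) and no accidental extra ones that would force the map to fail to be injective. This reduces to knowing the bracket of $L(A)$ precisely — in particular that the higher Serre brackets $[x_n,[x_n,[x_n,x_{n-1}]]]$ and $[x'_n,[x'_n,[x'_n,x_{n-1}]]]$ vanish on the module side, and that $[S_n,S'_n]=0$ — which in turn depends on computing a handful of Euler characteristics $\chi(V(X,Y;Z))$ for the relevant short exact sequences among the $U$'s, $V$'s and $W$'s. Once Proposition~\ref{prop:lie-brackets-of-L(A)} is in hand this is routine bookkeeping, but it is the computational heart of the argument; everything after it is linear algebra, dimension counting, and the universal property of $\mathcal{U}$.
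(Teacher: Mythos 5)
Your overall architecture is the paper's: part (a) rests on the explicit bracket formulas of Proposition~\ref{prop:lie-brackets-of-L(A)}, part (b) on the universal property of $\mathcal{U}$, and part (c) on the compatibility of $(-,-)_A$ with the root pairing via Theorem~\ref{thm:Gabriel's-theorem}(c). However, your injectivity argument for (a) contains a concrete false step. The images of the basis (g1) are \emph{not} ``up to signs and powers of $2$, exactly the standard basis vectors $u_X$''. From Proposition~\ref{prop:lie-brackets-of-L(A)}(c),(d) one computes $[x_{i,n},x_{j,n}]\mapsto [V_i,V_j]=U_{j,i}-U_{i,j}$, \ $x'_{i,n}\mapsto U_{i,n}+U_{n,i}$ (not $\pm U_{i,i}$; the element of the root space $2\varepsilon_i$ is $[x_{i,n-1},x'_{i,n}]\mapsto 2U_{i,i}$), and $[x_{j,n-1},x'_{i,n}]\mapsto U_{i,j}+U_{j,i}$. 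On each two-dimensional root space $\mathfrak{g}(\varepsilon_i+\varepsilon_j)$ the map is therefore given by a transition matrix of determinant $\pm 2$ with respect to the bases $\{[x_{i,n},x_{j,n}],\cdot\}$ and $\{U_{i,j},U_{j,i}\}$, so your ``basis goes to scalar multiples of a basis'' lemma does not apply as stated. The conclusion survives (determinant $\pm 2$ still gives injectivity over $\mathbb{Z}$ and bijectivity over $\mathbb{Z}[\tfrac12]$), but you must either compute these images correctly (as in Remark~\ref{rem:correspondance-on-basis}) or argue as the paper does: Corollary~\ref{cor:generators-of-L(A)} shows $S_1,\dots,S_n,S'_n$ generate $L(A)\otimes_{\mathbb{Z}}\mathbb{Z}[\tfrac12]$ (this is where $2$ is inverted, e.g.\ $U_{j,n}=\tfrac12([W_{j,n-1},U_{n,n}]-[V_j,V_n])$), so the map over $\mathbb{Z}[\tfrac12]$ is surjective between free modules of equal rank $\tfrac{3n^2+n}{2}$, hence bijective, hence injective over $\mathbb{Z}$. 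Relatedly, your explanation that the factor of $2$ ``comes from the Cartan form'' is misplaced for part (a), which involves no Cartan part; it comes from the two-dimensional root spaces $\varepsilon_i+\varepsilon_j$ (equivalently from the two fibers $U_{i,j},U_{j,i}$ in Theorem~\ref{thm:Gabriel's-theorem}(b)).

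Two smaller points. In (b), the bijectivity over $\mathbb{Q}$ is not a routine associated-graded check to be asserted: the statement that $\mathcal{U}(L(A))\otimes_{\mathbb{Z}}\mathbb{Q}\to\mathcal{H}(A)\otimes_{\mathbb{Z}}\mathbb{Q}$ is an isomorphism is Riedtmann's theorem (\cite[Proposition 3.1]{Riedtmann94} and its proof), which the paper cites; you should do the same rather than sketch a re-proof. In (a), defining the homomorphism by ``checking the relations on the generators'' implicitly uses that $\mathfrak{g}$ (hence $\mathfrak{g}^+$) is \emph{presented} by (BC1)--(BC3); this is how the paper proceeds too, but note that what you actually need for the integral statement is that the resulting map restricts to $\mathfrak{n}^+$, which is clear once the images of the (g1)-basis are computed to be the integral combinations above.
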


To prove Theorem~\ref{thm:ringel-hall-lie-algebra} we need some preparation. First we describe the Lie brackets of the basis elements of $L(A)$.

\begin{proposition}
\label{rel}
\label{prop:lie-brackets-of-L(A)}
The following equalities hold in $L(A)$:
\begin{itemize}
         \item[\rm (a)] $[W_{i,j}, W_{l,m}]=\delta_{j+1,l}W_{i,m}-\delta_{m+1,i}W_{l,j}$ for $1\leq i\leq j\leq n-1$ and $1\leq l\leq m\leq n-1$,
         \item[\rm (b)] $[W_{i,j}, V_l]=\delta_{j+1,l}V_i$ for $1\leq i\leq j\leq n-1$ and $1\leq l\leq n$,
         \item[\rm (c)] $[W_{i,j}, U_{l,m}]=\delta_{j+1,m}U_{l,i}+\delta_{j+1,l}U_{i,m}$ for $1\leq i\leq j\leq n$ and $1\leq l,m\leq n$,
         \item[\rm (d)] $[V_i, V_j]=U_{j,i}-U_{i,j}$ for $1\leq i,j\leq n$,
\end{itemize}
\noindent where $\delta_{i,j}$ is the Kronecker symbol. For other pairs of indecomposable representations, the Lie bracket of them equals 0.
\end{proposition}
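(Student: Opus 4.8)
The plan is to compute each Lie bracket $[u_X,u_Y]=u_X\cdot u_Y-u_Y\cdot u_X$ directly from the Hall-algebra multiplication, i.e.\ by determining, for every indecomposable $Z$ with $\dimv Z=\dimv X+\dimv Y$, the Euler characteristic $\chi(V(X,Y;Z))$ of the variety of submodules $Z_1\subseteq Z$ with $Z_1\cong Y$ and $Z/Z_1\cong X$. Because the classification of indecomposables (Theorem~\ref{repn}) is completely explicit and the modules $W_{i,j},V_i,U_{i,j}$ are ``string-like'' (each vector space is $0$, $K$ or $K^2$, with identity or elementary maps), the relevant submodule varieties will turn out to be single points, empty, or (in the self-extension cases) a projective line. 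First I would organise the computation by the type of the pair: (a) two $W$'s, (b) a $W$ and a $V$, (c) a $W$ and a $U$, (d) two $V$'s, and then separately argue that all remaining pairs $(W,W)$, $(V,U)$, $(U,U)$, $(W\text{ or }V\text{ or }U,\text{ itself})$, etc., give bracket $0$. In each case the key observation is a short exact sequence: for (a), the obvious sequence $0\to W_{i,j}\to W_{i,m}\to W_{l,m}\to 0$ exists precisely when $j+1=l$ (concatenation of strings), so $W_{i,j}\cdot W_{l,m}=W_{i,m}$ if $j+1=l$ and $0$ otherwise, whence the commutator; for (d) the two non-split self-extensions $0\to V_j\to U_{j,i}\to V_i\to 0$ and $0\to V_i\to U_{i,j}\to V_j\to 0$ together with vanishing of the diagonal term give $[V_i,V_j]=U_{j,i}-U_{i,j}$.

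The main technical point, which I would treat carefully, is the verification that each nonempty $V(X,Y;Z)$ has Euler characteristic exactly $1$ (not just nonempty). For the off-diagonal cases this is because the submodule $Z_1\cong Y$ inside $Z$ is \emph{unique}: a subrepresentation of a string module $Z$ isomorphic to a given string module is determined by which ``segment'' of $Z$ it occupies, and for the relevant $Z$ there is only one segment of the right shape supporting the right maps; so $V(X,Y;Z)$ is a point. The one case needing more than a pointedness argument is where $Z$ could a priori carry a $\P^1$ of such submodules — this happens only for self-extensions of the $1$-dimensional-at-each-vertex modules, and there one uses that $\chi(\P^1)=2$ but the relevant coefficient in the Hall product for an \emph{indecomposable} target $U$ is still computed from a single orbit; more precisely, for $Z=U_{i,j}$ with $i\neq j$ the variety $V(V_i,V_j;U_{i,j})$ is a single point, while the only place a $\P^1$ appears is $V(V_i,V_i;\text{(decomposable)})$, which is irrelevant to $L(A)$ and in any case produces the term $(V_i,V_i)_A$-worth of decomposables that is killed when we pass to the Lie algebra. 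I would isolate this in a short lemma identifying the submodule varieties.

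After the case analysis, closing the proof amounts to bookkeeping: collecting $u_X\cdot u_Y$ and $u_Y\cdot u_X$ and subtracting. For (b) one checks $W_{i,j}\cdot V_l=\delta_{j+1,l}V_i$ while $V_l\cdot W_{i,j}=0$ (no extension $0\to V_l\to ?\to W_{i,j}\to 0$ with indecomposable middle term, since the socle structure forces a split sequence or a non-string target), giving $[W_{i,j},V_l]=\delta_{j+1,l}V_i$. For (c) both concatenations $0\to W_{i,j}\to U_{i,m}\to U_{l,m}\to 0$ (when $j+1=l$) and $0\to W_{i,j}\to U_{l,i}\to U_{l,m}\to 0$ (when $j+1=m$) can occur — the $U$'s are the modules that ``wrap around'' the loop $\alpha$, so a string $W_{i,j}$ can be glued on at either the top row or the bottom row of the picture — and one verifies the reverse products vanish, yielding the two-term formula in (c). Finally, for any pair not covered by (a)--(d), I would argue directly that $\dimv X+\dimv Y$ is either not the dimension vector of any indecomposable, or is the dimension vector of indecomposables none of which admits the required short exact sequence (checking compatibility of the maps along the arrows $1\to 2\to\cdots\to n$ and the loop relation $\alpha^2=0$), so both $u_X\cdot u_Y$ and $u_Y\cdot u_X$ lie in the span of decomposables and hence their difference is $0$ in $L(A)$. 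The expected main obstacle is precisely the uniform handling of the submodule varieties — ruling out unexpected extra components and confirming the $\chi=1$ count across the several shapes of $U$ — rather than any single hard computation.
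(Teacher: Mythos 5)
Your plan is essentially the paper's own proof: first use the dimension-vector constraint (via the classification of indecomposables / Theorem~\ref{thm:Gabriel's-theorem}) to limit which indecomposable targets $Z$ can occur, then check that each relevant submodule variety $V(X,Y;Z)$ is a single point or empty from the explicit structure of $U_{i,j}$, $V_i$, $W_{i,j}$ — the paper carries this out for case (c) and declares the remaining cases similar, and your handling of the potential $\mathbb{P}^1$ of submodules (which only arises over decomposable targets and is killed in $L(A)$) is the right extra care. One caution for the write-up: with the paper's convention $V(X,Y;Z)=\{Z_1\subseteq Z\mid Z_1\cong Y,\ Z/Z_1\cong X\}$ the product $u_X\cdot u_Y$ counts extensions with $Y$ as the \emph{submodule}, so several of your displayed sequences have the sub and quotient roles reversed (e.g.\ $0\to W_{i,j}\to W_{i,m}\to W_{l,m}\to 0$ does not exist, since $W_{i,j}$ is not a submodule of $W_{i,m}$; the sequence that actually contributes is $0\to W_{l,m}\to W_{i,m}\to W_{i,j}\to 0$, and likewise $0\to V_{j+1}\to V_i\to W_{i,j}\to 0$ is exactly what makes $W_{i,j}\cdot V_{j+1}=V_i$ rather than being absent) — once the roles are fixed consistently, your stated products and the signs in (a)–(d) agree with the proposition.
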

\begin{proof}
We only prove (c) and the proof of the other cases is similar. It is clear that $\dimv W_{i,j}+\dimv U_{l,m}=\varepsilon_i-\varepsilon_{j+1}+\varepsilon_l+\varepsilon_m$ belongs to $\Phi^+$ if and only if $l=j+1$ or $m=j+1$. So $[W_{i,j},U_{l,m}]=0$ unless  $l=j+1$ or $m=j+1$, by Theorem~\ref{thm:Gabriel's-theorem}(a).

Case 1: $l=m=j+1$. Then
 $\dimv W_{i,j}+\dimv U_{j+1,j+1}=\varepsilon_i+\varepsilon_{j+1}$, so by Theorem~\ref{thm:Gabriel's-theorem}(b) the element $[W_{i,j},U_{j+1,j+1}]$ is a linear combination of $U_{i,j+1}$ and $U_{j+1,i}$.
Now it is easy to see from the structures of $W_{i,j}$, $U_{j+1,j+1}$, $U_{i,j+1}$ and  $U_{j+1,i}$ in Section~\ref{s:representation-theory} that neither $U_{i,j+1}$ nor $U_{j+1,i}$ has a submodule isomorphic to $W_{i,j}$, and both $U_{i,j+1}$ and $U_{j+1,i}$ have a unique submodule isomorphic to $U_{j+1,j+1}$ with quotient  isomorphic to $W_{i,j}$. So $[W_{i,j}, U_{j+1,j+1}]=U_{i,j+1}+U_{j+1,i}$.

Case 2: $l=j+1\neq m$.  Then $\dimv W_{i,j}+\dimv U_{j+1,m}=\varepsilon_i+\varepsilon_{m}$, so by Theorem~\ref{thm:Gabriel's-theorem}(b) the element $[W_{i,j},U_{j+1,m}]$ is a linear combination of $U_{i,m}$ and $U_{m,i}$. Now it is easy to see from the structures of $W_{i,j}$, $U_{j+1,m}$, $U_{i,m}$ and  $U_{m,i}$ in Section~\ref{s:representation-theory} that neither $U_{i,m}$ nor $U_{m,i}$ has a submodule isomorphic to $W_{i,j}$, $U_{i,m}$ has a unique submodule isomorphic to $U_{j+1,m}$ with quotient  isomorphic to $W_{i,j}$, and $U_{m,i}$ has no submodule isomorphic to $U_{j+1,m}$. So $[W_{i,j}, U_{j+1,m}]=U_{i,m}$.

Case 3: $l\neq m=j+1$. This is similar to Case 2.
\end{proof}

\begin{corollary}
\label{cor:generators-of-L(A)}
  $\{S_1,\ldots,S_{n-1},S_n,S'_n\}$ is a set of generators of $L(A)\otimes_{\mathbb{Z}}\mathbb{Z}[\frac{1}{2}]$.
\end{corollary}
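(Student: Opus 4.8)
The plan is to prove the corollary by a direct "generation by increasing length" argument, using only the explicit bracket formulas of Proposition~\ref{prop:lie-brackets-of-L(A)}. Write $\mathfrak{L}$ for the $\mathbb{Z}[\frac{1}{2}]$-subalgebra of $L(A)\otimes_{\mathbb{Z}}\mathbb{Z}[\frac{1}{2}]$ generated by $\{S_1,\dots,S_{n-1},S_n,S'_n\}=\{W_{1,1},\dots,W_{n-1,n-1},V_n,U_{n,n}\}$. It suffices to show that every $\mathbb{Z}$-basis element of $L(A)$ — each $W_{i,j}$ with $1\le i\le j\le n-1$, each $V_i$ with $1\le i\le n$, and each $U_{i,j}$ with $1\le i,j\le n$ — lies in $\mathfrak{L}$; then $\mathfrak{L}$ contains a $\mathbb{Z}[\frac{1}{2}]$-basis of $L(A)\otimes_{\mathbb{Z}}\mathbb{Z}[\frac{1}{2}]$, hence equals it.

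The verification goes in four stages. First, by induction on $j-i$ one gets all the $W$'s: part~(a) gives $[W_{i,i},W_{i+1,j}]=W_{i,j}$ for $i<j$ (the correction term vanishes since its Kronecker delta is $\delta_{j+1,i}=0$), so every $W_{i,j}$ with $1\le i\le j\le n-1$ lies in $\mathfrak{L}$. Second, since now $W_{i,n-1}\in\mathfrak{L}$ and $V_n=S_n$, part~(b) gives $[W_{i,n-1},V_n]=V_i$, so all $V_i$ with $1\le i\le n$ lie in $\mathfrak{L}$. Third, for the "boundary" modules $U_{i,n}$ and $U_{n,i}$ with $1\le i<n$: part~(c) gives $[W_{i,n-1},U_{n,n}]=U_{i,n}+U_{n,i}$, while part~(d) gives $[V_i,V_n]=U_{n,i}-U_{i,n}$, and both right-hand sides already lie in $\mathfrak{L}$; taking half the sum and half the difference puts $U_{i,n}$ and $U_{n,i}$ in $\mathfrak{L}$ — this is the only place where $\frac{1}{2}$ is invoked. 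Fourth, for the remaining $U$'s: for $1\le i<n$ and $1\le j\le n-1$, part~(c) gives $[W_{j,n-1},U_{i,n}]=U_{i,j}$ (the term $\delta_{n,i}U_{j,n}$ vanishes because $i<n$); combined with $U_{n,n}=S'_n$ and the elements $U_{n,j}$ ($j<n$) produced in the third stage, this exhibits every $U_{i,j}$ with $1\le i,j\le n$ inside $\mathfrak{L}$, completing the proof.

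Once Proposition~\ref{prop:lie-brackets-of-L(A)} is available this is almost pure bookkeeping, and the only step deserving a comment is the third one: I expect the mild subtlety there to be the observation that brackets of the $W_{i,j}$ with $S'_n$ (and iterations thereof) only ever yield the symmetric combinations $U_{i,j}+U_{j,i}$, so the two modules in a dimension-$\varepsilon_i+\varepsilon_j$ fibre cannot be separated by such brackets alone; the antisymmetric combination is forced to come from $[V_i,V_n]$, that is, ultimately from the simple modules $V_i$, and then one must divide by $2$. This is exactly why the statement is formulated over $\mathbb{Z}[\frac{1}{2}]$ rather than over $\mathbb{Z}$, in line with Theorem~\ref{thm:ringel-hall-lie-algebra}(a).
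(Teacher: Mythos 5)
Your proof is correct and follows essentially the same route as the paper's: both generate the $W_{i,j}$ by iterated brackets of the $W_{i,i}$, then the $V_i$, then obtain $U_{i,n}$ and $U_{n,i}$ as half the sum and difference of $[W_{i,n-1},U_{n,n}]$ and $[V_i,V_n]$ (the only place $\tfrac12$ is needed), and finally reach the remaining $U_{i,j}$ by bracketing with a $W$ of the form $W_{\ast,n-1}$. The only differences are cosmetic choices of which specific bracket to apply (e.g.\ the paper gets $V_i$ from $[W_{i,i},V_{i+1}]$ by decreasing induction rather than from $[W_{i,n-1},V_n]$), and your closing remark about the symmetric combinations $U_{i,j}+U_{j,i}$ correctly identifies why the localization at $2$ is necessary.
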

\begin{proof}
We need to prove that any indecomposable representation $M$ is generated by $W_{1,1}, W_{2,2}, \ldots W_{n-1,n-1}, V_n, U_{n,n}$.

Case 1: $M=W_{i,j}$, $1\leq i\leq j\leq n-1$. This follows from $[W_{i,i},W_{i+1,j}]\overset{\ref{rel}\rm(a)}=W_{i,j}$  by induction on $j$.

Case 2: $M=V_i$, $1\leq i\leq n$. This follows from  $[W_{i,i}, V_{i+1}]\overset{\ref{rel}\rm(b)}=V_{i}$ by decreasing induction on $i$.

Case 3: $M=U_{i,j}$, $1\leq i,j\leq n$. We first assume $i=n$. Then $[W_{j,n-1},U_{n,n}]\overset{\ref{rel}\rm(c)}=U_{n,j}+U_{j,n}$. Moreover, $[V_j,V_n]\overset{\ref{rel}\rm(d)}=U_{n,j}-U_{j,n}$. So $U_{j,n}=\frac{[W_{j,n-1},U_{n,n}]-[V_j,V_n]}{2}$ and $U_{n,j}=\frac{[W_{j,n-1},U_{n,n}]+[V_j,V_n]}{2}$. This also solves the problem for $j=n$. Next we assume $i\neq n$ and $j\neq n$. Then $U_{i,j}\overset{\ref{rel}\rm(c)}=[W_{i,n-1},U_{n,j}]$. The proof finishes.
\end{proof}

Now we are ready to prove Theorem~\ref{thm:ringel-hall-lie-algebra}.

\begin{proof}[Proof of Theorem~\ref{thm:ringel-hall-lie-algebra}.]
(a) By Proposition~\ref{prop:lie-brackets-of-L(A)}, the assignment $x_1\mapsto S_1,\ldots,x_{n-1}\mapsto S_{n-1}, x_n\mapsto S_n, x'_n\mapsto S'_n$ extends to a homomorphism $\mathfrak{n}^+\rightarrow L(A)$ of Lie algebras over $\mathbb{Z}$. By extending the scalars, we obtain a homomorphism $\mathfrak{n}^+\otimes_{\mathbb{Z}}\mathbb{Z}[\frac{1}{2}]\rightarrow L(A)\otimes_{\mathbb{Z}}\mathbb{Z}[\frac{1}{2}]$ of Lie algebras over $\mathbb{Z}[\frac{1}{2}]$. By Corollary~\ref{cor:generators-of-L(A)}, this homomorphism is surjective, and hence bijective because both sides are free over $\mathbb{Z}[\frac{1}{2}]$ of rank $\frac{3n^2+n}{2}$. This implies that the homomorphism $\mathfrak{n}^+\rightarrow L(A)$ is injective.

(b) We have the following chain of homomorphisms of $\mathbb{Z}$-algebras:
\[
\mathcal{U}(\mathfrak{n}^+)\longrightarrow \mathcal{U}(L(A))\longrightarrow \mathcal{H}(A).
\]
The first homomorphism is injective by (a), and the second one is injective by \cite[Proposition 3.1]{Riedtmann94}. So the composition is injective. By extending the scalars we obtain a chain of injective homomorphisms of $\mathbb{Q}$-algebras:
\[
\mathcal{U}(\mathfrak{n}^+)\otimes_{\mathbb{Z}}\mathbb{Q}\longrightarrow \mathcal{U}(L(A))\otimes_{\mathbb{Z}}\mathbb{Q}\longrightarrow \mathcal{H}(A)\otimes_{\mathbb{Z}}\mathbb{Q}.
\]
The first homomorphism is bijective by (a) and the second one is bijective by the proof of \cite[Proposition 3.1]{Riedtmann94}.

(c) It is enough to prove $\varphi([h_i,x_j])=[\varphi(h_i),\varphi(x_j)]$ and $\varphi([h_i,x'_n])=[\varphi(h_i),\varphi(x'_n)]$, more precisely,
\begin{itemize}
\item[-] $\varphi([h_i,x_j])=[h_{S_i},S_j]=(S_i,S_j)_A S_j$ for $1\leq i\leq n-1$ and $1\leq j\leq n$;
\item[-] $\varphi([h_i,x'_n])=[h_{S_i},S'_n]=(S_i,S'_n)S'_n$ for $1\leq i\leq n-1$;
\item[-] $\varphi([h_n,x_j])=[2h_{S_n},S_j]=2(S_n,S_j)_A S_j$ for $1\leq j\leq n$;
\item[-] $\varphi([h_n,x'_n])=[2h_{S_n},S'_n]=2(S_n,S'_n)S'_n$.
\end{itemize}
This is straightforward.
\end{proof}

\begin{remark} 
\label{rem:correspondance-on-basis}
On basis elements, the homomorphism in Theorem~\ref{thm:ringel-hall-lie-algebra}(a) is given by
\begin{align*}
x_{i,j-1}&\mapsto W_{i,j-1}~~(1\leq i< j\leq n)\\
x_{i,n}&\mapsto V_i~~(1\leq i\leq n)\\
[x_{i,n},x_{j,n}] &\mapsto U_{j,i}-U_{i,j}~~(1\leq i<j\leq n)\\
x'_n&\mapsto U_{n,n}\\
x'_{i,n}&\mapsto U_{i,n}+U_{n,i}~~(1\leq i\leq n-1)\\
[x_{j,n-1},x'_{i,n}]&\mapsto U_{i,j}+U_{j,i}~~(1\leq i\leq j\leq n-1).
\end{align*}
\end{remark}

We have the following corollary of Theorem~\ref{thm:ringel-hall-lie-algebra}, Corollary\ref{cor:type-b-and-c-as-quotient} and Remark~\ref{rem:correspondance-on-basis}.
\begin{corollary}
The subspace of $\tilde{L}(A)\otimes_{\mathbb{Z}}\mathbb{C}$ spanned by $U_{i,j}+U_{j,i}~(1\leq i\leq j\leq n)$ is a Lie ideal and the corresponding quotient is isomorphic to $\mathfrak{b}_{\mathbf{B}}$. The subspace of $\tilde{L}(A)\otimes_{\mathbb{Z}}\mathbb{C}$ spanned by $V_i~(1\leq i\leq n),~U_{j,i}-U_{i,j}~(1\leq i< j\leq n)$ is a Lie ideal and the corresponding quotient is isomorphic to $\mathfrak{b}_{\mathbf{C}}$.
\end{corollary}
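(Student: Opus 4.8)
The plan is to combine the explicit description of the correspondence on basis elements in Remark~\ref{rem:correspondance-on-basis} with the structural results already established for $\mathfrak{g}$. First I would identify the two subspaces in question with images under the isomorphism $\varphi\otimes_{\mathbb{Z}}\mathbb{C}\colon \mathfrak{g}\longrightarrow\tilde{L}(A)\otimes_{\mathbb{Z}}\mathbb{C}$ of Theorem~\ref{thm:ringel-hall-lie-algebra}. By Remark~\ref{rem:correspondance-on-basis}, the span of $U_{i,j}+U_{j,i}~(1\leq i\leq j\leq n)$ is precisely $\varphi(\mathbb{C}\text{-span of }\{x'_n\}\cup\{x'_{i,n}~(1\leq i\leq n-1)\}\cup\{[x_{j,n-1},x'_{i,n}]~(1\leq i\leq j\leq n-1)\})$; by the proof of Corollary~\ref{cor:type-b-and-c-as-quotient} (or the proof of Lemma~\ref{lem:BC=B+C}) this is exactly the Lie ideal of $\mathfrak{g}$ generated by $x'_n$. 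Hence its image is a Lie ideal of $\tilde{L}(A)\otimes_{\mathbb{Z}}\mathbb{C}$, and the quotient is isomorphic to $\mathfrak{g}/(x'_n)\cong\mathfrak{b}_{\mathbf{B}}$ by Corollary~\ref{cor:type-b-and-c-as-quotient}. Note that since the ideal lies inside $\mathfrak{g}^+$, it has zero intersection with $\mathfrak{h}$, so adjoining the Cartan part $K_0(\mathrm{mod}\,A)$ does not interfere; the quotient contains the image of $\mathfrak{h}$, which becomes the Cartan part of $\mathfrak{b}_{\mathbf{B}}$.

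For the second assertion I would argue symmetrically. Again by Remark~\ref{rem:correspondance-on-basis}, the span of $V_i~(1\leq i\leq n)$ and $U_{j,i}-U_{i,j}~(1\leq i<j\leq n)$ is the $\varphi$-image of the $\mathbb{C}$-span of $\{x_{i,n}~(1\leq i\leq n)\}\cup\{[x_{i,n},x_{j,n}]~(1\leq i<j\leq n)\}$, which by the proof of Corollary~\ref{cor:type-b-and-c-as-quotient} is exactly the Lie ideal of $\mathfrak{g}$ generated by $x_n$. Therefore its image is a Lie ideal of $\tilde{L}(A)\otimes_{\mathbb{Z}}\mathbb{C}$ and the quotient is isomorphic to $\mathfrak{g}/(x_n)\cong\mathfrak{b}_{\mathbf{C}}$, again via Corollary~\ref{cor:type-b-and-c-as-quotient}. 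One should keep track of which generators and Cartan elements descend: under $\varphi$ the elements $h_i\,(1\leq i\leq n-1)$ and $\frac{1}{2}h_n$ map to $h_{S_i}$ and $h_{S_n}$, matching the Cartan generators $h'_i$ of $\mathfrak{b}_{\mathbf{C}}$, while $x_i\,(1\leq i\leq n-1)$ and $x'_n$ map to $S_i$ and $S'_n=U_{n,n}$, matching $x'_i$ and $x'_n$; this makes explicit the identification at the level of generators.

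The only genuine point to check is that these two spans are closed under the bracket with all of $\tilde{L}(A)\otimes_{\mathbb{Z}}\mathbb{C}$, i.e.\ that they really are Lie ideals and not merely subspaces whose $\mathfrak{g}^+$-images are ideals of $\mathfrak{g}^+$. Since $\varphi$ is an isomorphism of Lie algebras it transports Lie ideals to Lie ideals, so it suffices that the two preimages are Lie ideals of $\mathfrak{g}$ itself (including the bracket with $\mathfrak{h}$); for the ideal generated by $x'_n$ this requires $[\mathfrak{h},x'_n]\subseteq(x'_n)$, which is immediate since $[h_i,x'_n]$ is a scalar multiple of $x'_n$ by relations (B2)/(C2) together with $h'_n=\tfrac12 h_n$, and similarly for $x_n$. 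The main (and essentially only) obstacle is therefore bookkeeping: verifying via Remark~\ref{rem:correspondance-on-basis} that the listed spanning sets $\{U_{i,j}+U_{j,i}\}$ and $\{V_i\}\cup\{U_{j,i}-U_{i,j}\}$ coincide exactly with the bases of the ideals $(x'_n)$ and $(x_n)$ given in the proof of Corollary~\ref{cor:type-b-and-c-as-quotient}, with no elements missing or extra; once that matching is confirmed the corollary follows formally.
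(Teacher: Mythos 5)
Your argument is correct and is essentially the paper's own route: the paper gives no written proof beyond citing Theorem~\ref{thm:ringel-hall-lie-algebra}, Corollary~\ref{cor:type-b-and-c-as-quotient} and Remark~\ref{rem:correspondance-on-basis}, and your write-up just makes explicit the intended bookkeeping --- matching the two spanning sets with the bases of the ideals generated by $x'_n$ and by $x_n$ and transporting them through the isomorphism $\varphi\otimes_{\mathbb{Z}}\mathbb{C}$. No gap.
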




\section{Appendix}
In this appendix we show that the bilinear form $(-,-)_A$ introduced in Section~\ref{s:representation-theory} is a modified symmetric Euler form for $A$.

\medskip

Since the global dimension of the algebra $A$ is infinite, the usual Euler form $\langle M, N\rangle=\sum_{p=0}^{\infty}(-1)^{p}\dim_K {\rm Ext}_A^p(M,N)$ is not well-defined. We modify it as follows.
First define 
\[
\langle  M, N\rangle_t=\sum_{p=0}^{\infty}\dim_K{\rm Ext}^p(M,N)(-t)^p.
\] 
This form is not additive but additive with respect to split short exact sequences.

\begin{theorem}
\label{thm:euler-form-vs-cartan-matrix}
\begin{itemize}
\item[\rm (a)] For any $M$ and $N$ in $\mathrm{rep}_K(Q,I)$, the power series $\langle M,N\rangle_t$ is a rational function in $t$ and $t=1$ is not a pole.
\item[\rm (b)] The form $\langle -,-\rangle_1$ is additive.
\item[\rm (c)] The matrix of the form $\langle -,-\rangle_1$ with respect to the basis $\{S_1,\ldots,S_n\}$ is $C_A^{-t}$.
\end{itemize}
\end{theorem}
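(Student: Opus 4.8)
The plan is to compute everything from an explicit minimal projective resolution of each simple module, which is easy because $A$ is a string algebra of a very simple shape. First I would record the projectives: $P_i = U_{n,i}$ for $1\le i\le n$, with $\rad P_i = P_{i+1}$ for $1\le i\le n-1$ and $\rad P_n$ being the simple socle $S_n$. From this one reads off minimal projective resolutions of the simples. For $1\le i\le n-1$ the resolution of $S_i$ is short: $0\to P_{i+1}\to P_i\to S_i\to 0$ (so $S_i$ has projective dimension $1$, except $S_{n-1}$ whose resolution is $0\to P_n\to P_{n-1}\to S_{n-1}\to 0$). The interesting case is $S_n$: since $\rad P_n\cong S_n$, the resolution is $\cdots\to P_n\to P_n\to P_n\to S_n\to 0$, periodic of period $1$ with all maps the nilpotent $\alpha$. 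Consequently $\Ext^p(S_n,N)$ for $p\ge 1$ is the cohomology of the complex $\Hom(P_n,N)\xrightarrow{\alpha^*}\Hom(P_n,N)\xrightarrow{\alpha^*}\cdots$, i.e.\ governed by the action of $\alpha$ on $N_n$.

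Next I would assemble the generating function. For $1\le i\le n-1$, $\langle S_i,N\rangle_t = \dim\Hom(S_i,N) - t\,\dim\Ext^1(S_i,N)$ is a polynomial, hence trivially rational with no pole at $t=1$, and $\langle S_i,N\rangle_1 = \dim_K P_i\text{-part} - \dim_K P_{i+1}\text{-part}$ evaluated on $N$, which is the alternating count coming from applying $\Hom(-,N)$ to the two-term projective resolution: this is exactly the $i$-th coordinate computation one expects, and it will turn out to equal $(\dimv N)$ paired with the $i$-th row of $C_A^{-t}$. For $S_n$, using the periodic resolution, $\langle S_n,N\rangle_t = \sum_{p\ge 0}\dim\Ext^p(S_n,N)(-t)^p$; writing $a_p = \dim\Ext^p(S_n,N)$, periodicity gives a formula of the form $\langle S_n,N\rangle_t = \dfrac{c_0 + c_1 t}{1+t}$ (the denominator $1-(-t)\cdot 1 = 1+t$ coming from the period-$1$ tail, numerator a polynomial of degree $\le 1$ collecting the low-degree discrepancy). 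Since $1+t$ does not vanish at $t=1$, part (a) follows; and evaluating at $t=1$ gives $\langle S_n,N\rangle_1 = \tfrac12(c_0+c_1)$, a well-defined number. Then I verify this equals $(\dimv N)$ paired with the last row of $C_A^{-t}$, i.e.\ the row $(-\tfrac12,\ldots,-\tfrac12,\tfrac12)$ — the factor $\tfrac12$ being precisely the halving produced by the $1+t$ in the denominator, which is the conceptual reason the Cartan-inverse enters.

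For part (b), additivity of $\langle-,-\rangle_1$: the form $\langle-,-\rangle_t$ is additive in short exact sequences that split, but not in general; however, once (a) is known, I would argue that $\langle M,N\rangle_1$ depends only on the class $\dimv M$ in $K_0$. The cleanest route is to note that for any short exact sequence $0\to M'\to M\to M''\to 0$ the long exact sequence in $\Ext(-,N)$ gives $a_p(M) = a_p(M') + a_p(M'') + (\text{alternating boundary corrections})$, and multiplying by $(-t)^p$ and summing the telescoping correction terms contribute a multiple of $(1+t)$ — wait, more carefully: the long exact sequence yields $\langle M,N\rangle_t = \langle M',N\rangle_t + \langle M'',N\rangle_t$ as an identity of rational functions (the connecting maps cancel in the alternating sum in the usual Euler-characteristic way, now legitimate since all three are rational with controlled poles). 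Specializing $t=1$, which is legitimate by (a), gives additivity in the first variable; additivity in the second variable is analogous (or follows since the form will be shown symmetric via (c)). Then (c) is immediate once (b) holds: both sides are additive, so it suffices to check $\langle S_i, S_j\rangle_1$ equals the $(i,j)$-entry of $C_A^{-t}$, and the matrix $C_A^{-t}$ was already computed in the proof of Theorem~\ref{thm:Gabriel's-theorem} to be the transpose of $\frac12(C_A^{-1}+C_A^{-t})$-type data; one checks $\langle S_i,S_j\rangle_1$ against it entrywise using the resolutions above.

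The main obstacle I anticipate is part (b): justifying that one may treat $\langle M,N\rangle_t$ as additive in short exact sequences \emph{as rational functions} and then specialize at $t=1$. The subtlety is that $\langle-,-\rangle_t$ is genuinely not additive as a power series (the $\Ext$-dimensions do not add), so the argument must route through the long exact $\Ext$-sequence and an Euler-characteristic cancellation that is only valid after knowing the series are rational with no pole at $1$ — i.e.\ (a) must really be used as an input to (b), not merely proved alongside it. Everything else is a bounded computation with two-term or periodic resolutions.
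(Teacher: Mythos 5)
Your strategy (explicit minimal projective resolutions of the simples, periodicity of the resolution of $S_n$, and a long-exact-sequence argument for additivity) is genuinely different from the paper's, which instead tabulates $\langle M,N\rangle_t$ for \emph{all} pairs of indecomposables, reads off (a) from the fact that the only non-polynomial entries are $\sum_p(-t)^p=\frac{1}{1+t}$, and then proves (b) by directly comparing $\langle M,N\rangle_1$ with $\sum_{i,j}(\dim_KM_i)(\dim_KN_j)\langle S_i,S_j\rangle_1$ case by case. Your route would be shorter and more conceptual, but the pivotal step in your part (b) is false as stated. For a short exact sequence $0\to M'\to M\to M''\to 0$, the long exact sequence in $\Ext(-,N)$ does \emph{not} yield $\langle M,N\rangle_t=\langle M',N\rangle_t+\langle M'',N\rangle_t$ as rational functions. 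Writing $c_p$ for the rank of the connecting map $\Ext^p(M',N)\to\Ext^{p+1}(M'',N)$, the usual rank bookkeeping gives
\[
\langle M'',N\rangle_t-\langle M,N\rangle_t+\langle M',N\rangle_t=(1-t)\sum_{p\geq 0}c_p(-t)^p,
\]
so the defect is a multiple of $(1-t)$ (not of $(1+t)$, as you first wrote before retreating to the stronger, incorrect claim), and it is genuinely nonzero: for $0\to S_2\to W_{1,2}\to S_1\to 0$ and $N=S_2$ one has $\langle W_{1,2},S_2\rangle_t=0$ while $\langle S_1,S_2\rangle_t+\langle S_2,S_2\rangle_t=1-t$.

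The argument is salvageable precisely because the factor is $(1-t)$: you must additionally show that $\sum_p c_p(-t)^p$ is rational with no pole at $t=1$, so that the defect vanishes upon specialization. This follows from the observation that every minimal projective resolution over $A$ is eventually periodic of period one (every syzygy sequence either terminates in a projective or stabilizes at $S_n=\rad P_n$, since $\Omega S_n\cong S_n$), whence all the dimension and rank sequences in the long exact sequence are eventually constant and every series in sight has its only possible pole at $t=-1$. Note that this same periodicity statement is also what you need to establish part (a) for \emph{arbitrary} $M$ — your proposal only resolves the simples, yet your additivity argument already invokes rationality of all three terms, so without it the argument is circular. Two smaller corrections: the last row of $C_A^{-t}$ is $(0,\ldots,0,\frac{1}{2})$, not $(-\frac{1}{2},\ldots,-\frac{1}{2},\frac{1}{2})$ (consistent with $\langle S_n,S_j\rangle_1=0$ for $j<n$, since $\Hom(P_n,S_j)=0$); and the proof of Theorem~\ref{thm:Gabriel's-theorem} computes $C_A^{-1}+C_A^{-t}$, not $C_A^{-t}$ itself, so the entrywise check in (c) still has to be done from the resolutions.
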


By Theorem~\ref{thm:euler-form-vs-cartan-matrix}(b), the form $\langle-,-\rangle_1$ can be considered as a bilinear form on $K_0(\mathrm{rep}_K(Q,I))$. Restricted to the subgroup generated by the classes of $S_1,\ldots,S_{n-1},U_{n,n}$, it is exactly the Euler form $\langle-,-\rangle_H$ in \cite[Section 3.3]{GeissLeclercSchroeer16} because these modules have projective dimension $1$.
Recall from Definition~\ref{def:bilinear-form} that there is a bilinear form $(-,-)_A$.

\begin{corollary}
For any $M$ and $N$ in ${\rm rep}_K(Q,I)$ we have
\[
(M,N)_A=\langle M,N\rangle_1+\langle N,M\rangle_1
\]
\end{corollary}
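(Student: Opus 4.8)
The plan is to combine Theorem~\ref{thm:Gabriel's-theorem}(c) with Theorem~\ref{thm:euler-form-vs-cartan-matrix}. By Definition~\ref{def:bilinear-form} we have $(M,N)_A=(\dimv M)(C_A^{-1}+C_A^{-t})(\dimv N)^t$, while Theorem~\ref{thm:euler-form-vs-cartan-matrix}(c) says that the matrix of $\langle-,-\rangle_1$ in the basis $\{S_1,\ldots,S_n\}$ is $C_A^{-t}$. So the first step is to observe that $\langle-,-\rangle_1$ is additive (Theorem~\ref{thm:euler-form-vs-cartan-matrix}(b)), hence descends to a well-defined bilinear form on $K_0(\mathrm{rep}_K(Q,I))$; since the classes $[S_1],\ldots,[S_n]$ form a $\mathbb{Z}$-basis of $K_0(\mathrm{rep}_K(Q,I))$ and the class of any $M$ in this basis is precisely its dimension vector $\dimv M$, we get $\langle M,N\rangle_1=(\dimv M)\,C_A^{-t}\,(\dimv N)^t$ for all $M,N$.

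The second step is the symmetry bookkeeping: swapping the arguments gives $\langle N,M\rangle_1=(\dimv N)\,C_A^{-t}\,(\dimv M)^t$, and transposing this scalar (a $1\times 1$ matrix equals its transpose) yields $\langle N,M\rangle_1=(\dimv M)\,(C_A^{-t})^t\,(\dimv N)^t=(\dimv M)\,C_A^{-1}\,(\dimv N)^t$. Adding the two expressions,
\[
\langle M,N\rangle_1+\langle N,M\rangle_1=(\dimv M)\,(C_A^{-t}+C_A^{-1})\,(\dimv N)^t=(M,N)_A,
\]
which is exactly the claim. No genuine obstacle remains here: everything reduces to the already-proved Theorem~\ref{thm:euler-form-vs-cartan-matrix} and a transpose. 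If one wishes to avoid appealing to part (c) directly, an alternative is to verify the identity on the basis $\{S_i\}$ by hand, checking that $(S_i,S_j)_A=\langle S_i,S_j\rangle_1+\langle S_j,S_i\rangle_1$ entrywise; this uses that $S_1,\ldots,S_{n-1},S_n=V_n$ have small projective resolutions (the $S_i$ for $i<n$ have projective dimension $1$, while $S_n=V_n$ has an eventually $2$-periodic resolution whose $\langle-,-\rangle_t$ is easily summed) so the left- and right-hand sides can be matched against the explicit matrix $C_A^{-1}+C_A^{-t}$ displayed in the proof of Theorem~\ref{thm:Gabriel's-theorem}.

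The only point that deserves a sentence of care is the compatibility of bases: $C_A^{-t}$ in Theorem~\ref{thm:euler-form-vs-cartan-matrix}(c) is the matrix in the basis of simples, whereas Definition~\ref{def:bilinear-form} is phrased via dimension vectors; these agree because the coordinate vector of $[M]$ with respect to $\{[S_1],\ldots,[S_n]\}$ is $\dimv M$ (each $S_i$ being one-dimensional at vertex $i$ and zero elsewhere). Once this is noted, the corollary is immediate, so I would present it as a two-line deduction rather than a separate argument.
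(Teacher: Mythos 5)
Your argument is correct and is essentially the paper's own proof, which consists of the single observation that by Theorem~\ref{thm:euler-form-vs-cartan-matrix}(c) the matrix of the symmetrization of $\langle-,-\rangle_1$ is $C_A^{-1}+C_A^{-t}$; you have merely spelled out the additivity and base-change bookkeeping that the paper leaves implicit. (Whether the matrix convention in (c) gives $\langle M,N\rangle_1=(\dimv M)\,C_A^{-t}\,(\dimv N)^t$ or its transpose is immaterial, since the symmetrized matrix $C_A^{-1}+C_A^{-t}$ is the same either way.)
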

\begin{proof}
By Theorem~\ref{thm:euler-form-vs-cartan-matrix}(c), the matrix of the symmetric form associated to $\langle-,-\rangle_1$ is exactly $C_A^{-1}+C_A^{-t}$.
\end{proof}

We start to prove Theorem~\ref{thm:euler-form-vs-cartan-matrix}. We first make a table for $\langle M,N\rangle_t$, where $M$ and $N$ are indecomposable. 
This is obtained by a direct computation on ${\rm Ext}_A^*(M,N)$, which we omit.

\begin{lemma}\label{NEC}
\label{lem:table-for-euler-form-in-t}
The following Table 1 gives the values of $\langle M,  N\rangle_t$ with column $M$ and row $N$.

\begin{table}
\label{tab:euler-form-in-t}
\caption{The values of $\langle M,  N\rangle_t$}
\medskip
 \resizebox{\textwidth}{65mm}{
\begin{tabular}{|c|c|c|c|}
  \hline
   & $U_{i,j}$ & $V_i$ & $W_{i,j}$  \\ \hline
   $U_{l,k}(1\leq k\leq l<n)$ & $\left\{
                                   \begin{array}{ll}
                                     -2t, & \hbox{$l< \min\{i,j\}$;} \\
                                     -t, & \hbox{$k<j\leq l<i$;} \\
                                     -t, & \hbox{$k<i\leq l<j$;} \\
                                     0, & \hbox{$k<i\leq j\leq l$;} \\
                                     0, & \hbox{$k<j<i\leq l$;} \\
                                     0, & \hbox{$j\leq k\leq l<i$;} \\
                                     1, & \hbox{$j\leq k<i \leq l$;} \\
                                     1-t, & \hbox{$i\leq k\leq l<j$;} \\
                                     1, & \hbox{$i\leq k<j \leq l$;} \\
                                     2, & \hbox{$\max\{i,j\}\leq k$.}
                                   \end{array}
                                 \right.$
 & $\left\{
                                   \begin{array}{ll}
                                     -t, & \hbox{$k\leq l<i$;} \\
                                     0, & \hbox{$k<i\leq l$;} \\
                                     1, & \hbox{$i\leq k\leq l$.}
                                   \end{array}
                                 \right.$ &  $\left\{
                                   \begin{array}{ll}
                                     1, & \hbox{$k<i\leq l\leq j$;} \\
                                     2, & \hbox{$i\leq k\leq l\leq j$;} \\
                                     1, & \hbox{$i\leq k\leq j < l$;} \\
                                     0, & \hbox{other.}
                                   \end{array}
                                 \right.$   \\ \hline
  $U_{l,k}(1\leq l<k\leq n)$& $\left\{
                                   \begin{array}{ll}
                                     -2t, & \hbox{$k< \min\{i,j\}$;} \\
                                     -t, & \hbox{$l<j\leq k<i$;} \\
                                     1-2t, & \hbox{$l<i\leq k<j$;} \\
                                     1-t, & \hbox{$l<i\leq j\leq k$;} \\
                                     1-t, & \hbox{$l<j<i\leq k$;} \\
                                     0, & \hbox{$j\leq l<k<i$;} \\
                                     1, & \hbox{$j\leq l<i \leq k$;} \\
                                     1-t, & \hbox{$i\leq l<k<j$;} \\
                                     2-t, & \hbox{$i\leq l<j \leq k$;} \\
                                     2, & \hbox{$\max\{i,j\}\leq l$.}
                                   \end{array}
                                 \right.$ & $\left\{
                                   \begin{array}{ll}
                                     -t, & \hbox{$l<k<i$;} \\
                                     1-t, & \hbox{$l<i\leq k$;} \\
                                     1, & \hbox{$i\leq l<k$.}
                                   \end{array}
                                 \right.$ &  $\left\{
                                   \begin{array}{ll}
                                     1, & \hbox{$l<i\leq k\leq j$;} \\
                                     2, & \hbox{$i\leq l<k\leq j$;} \\
                                     1, & \hbox{$i\leq l\leq j < k$;} \\
                                     0, & \hbox{other.}
                                   \end{array}
                                 \right.$      \\ \hline
  $U_{l,k}(1\leq k\leq l=n)$ & $\left\{
                                   \begin{array}{ll}
                                     0, & \hbox{$k<\min\{i,j\}$;} \\
                                     1, & \hbox{$j\leq k<i$;} \\
                                     1, & \hbox{$i\leq k<j$;} \\
                                     2, & \hbox{$\max\{i,j\}\leq k$.}
                                   \end{array}
                                 \right.$ & $\left\{
                                   \begin{array}{ll}
                                     0, & \hbox{$k<i$;} \\
                                     1, & \hbox{$i\leq k$.}
                                   \end{array}
                                 \right.$ & $\left\{
                                   \begin{array}{ll}
                                     1, & \hbox{$i\leq k\leq j$;} \\
                                     0, & \hbox{other.}
                                   \end{array}
                                 \right.$      \\ \hline
  $V_l(1\leq l\leq n)$ & $\left\{
                                   \begin{array}{ll}
                                     -t, & \hbox{$l<\min\{i,j\}$;} \\
                                     0, & \hbox{$j\leq l<i$;} \\
                                     1-t, & \hbox{$i\leq l<j$;} \\
                                     1, & \hbox{$\max\{i,j\}\leq l$.}
                                   \end{array}
                                 \right.$ & $\left\{
                                   \begin{array}{ll}
                                     \sum_{p=0}^{\infty}(-t)^p, & \hbox{$i\leq l$;} \\
                                     \sum_{p=1}^{\infty}(-t)^p, & \hbox{$l< i$.}
                                   \end{array}
                                 \right.$ &  $\left\{
                                   \begin{array}{ll}
                                     1, & \hbox{$i\leq l\leq j$;} \\
                                     0, & \hbox{other.}
                                   \end{array}
                                 \right.$        \\ \hline
  $W_{l,k}(1\leq l\leq k< n)$ & $\left\{
                                   \begin{array}{ll}
                                     0, & \hbox{$l<k+1<\min\{i,j\}$;} \\
                                     -t, & \hbox{$l<j\leq k+1<i$;} \\
                                     -t, & \hbox{$l<i\leq k+1<j$;} \\
                                     -2t, & \hbox{$l<i\leq j\leq k+1$;} \\
                                     -2t, & \hbox{$l<j<i\leq k+1$;} \\
                                     0, & \hbox{$j\leq l<k+1< i$;} \\
                                     -t, & \hbox{$j\leq l<i \leq k+1$;} \\
                                     0, & \hbox{$i\leq l<k+1<j$;} \\
                                     -t, & \hbox{$i\leq l<j \leq k+1$;} \\
                                     0, & \hbox{$\max\{i,j\}\leq l$.}
                                   \end{array}
                                 \right.$ & $\left\{
                                   \begin{array}{ll}
                                     0, & \hbox{$l<k+1<i$;} \\
                                     -t, & \hbox{$l<i\leq k+1$;} \\
                                     0, & \hbox{$i\leq l<k+1$.}
                                   \end{array}
                                 \right.$ & $\left\{
                                   \begin{array}{ll}
                                     1, & \hbox{$i\leq l\leq j<k+1$;} \\
                                     -t, & \hbox{$l<i\leq k+1\leq j$;} \\
                                     0, & \hbox{$i\leq l<k+1\leq j$;} \\
                                     0, & \hbox{other.}
                                   \end{array}
                                 \right.$     \\
  \hline
\end{tabular}}
\end{table}
\end{lemma}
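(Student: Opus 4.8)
The plan is to read every entry off a minimal projective resolution. Write $P_k=U_{n,k}$ for the indecomposable projectives, so that $\Hom_A(P_k,N)\cong N_k$ (the vector space of $N$ at vertex $k$) naturally in $N$, and a map between projectives induced by one of the arrows $a_1,\dots,a_{n-1},\alpha$ induces on $\Hom_A(-,N)$ exactly the corresponding structure map of $N$. Thus for a minimal projective resolution $P_\bullet\to M$ the complex $\Hom_A(P_\bullet,N)$ has terms that are direct sums of the spaces $N_k$, with differentials assembled from the arrows occurring in the resolution, and $\langle M,N\rangle_t=\sum_{p\ge0}\dim\Ext^p(M,N)(-t)^p$ is its graded Euler series. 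So I first compute the resolutions, then evaluate this series.

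The decisive input is a clean dichotomy among the resolutions. Each $W_{i,j}$ has the two–term resolution $0\to P_{j+1}\to P_i\to W_{i,j}\to0$; each $U_{i,j}$ is either projective (exactly the $U_{n,k}=P_k$) or fits in a two–term resolution whose first syzygy is the projective $P_n=U_{n,n}$, so $\pd U_{i,j}\le1$ in every case; and each $V_i$ has infinite projective dimension with $\Omega V_i=S_n=V_n$ and $\Omega V_n=V_n$, giving the eventually $1$–periodic resolution $\cdots\to P_n\xrightarrow{\alpha}P_n\to P_i\to V_i\to0$. The delicate point, which I expect to need the most care, is the syzygy of the $U$’s: although the top of a non-projective $U_{i,j}$ is generally a sum of two simples (so its projective cover is $P_{\min(i,j)}\oplus P_{\max(i,j)}$, degenerating to $P_i^2$ when $i=j$), the first syzygy is not the naive $S_n^{\oplus2}$ but the projective $P_n$; this must be checked by tracking the loop action $\alpha e_1=e_2$, $\alpha e_2=0$ on the copy of $K^2$ at vertex $n$, exactly as in $0\to P_n\to P_i^2\to U_{i,i}\to0$. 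The immediate consequence is that $\Ext^p(M,N)=0$ for $p\ge2$ unless $M=V_i$, and that for $M=V_i$ the groups $\Ext^p(V_i,N)$ with $p$ large are all isomorphic to $\ker(\alpha_N)/\im(\alpha_N)$, where $\alpha_N$ is the loop acting on $N_n$. Since $\alpha$ acts as a rank-one nilpotent on $(U_{l,k})_n=K^2$, as zero on $(V_l)_n=K$, and since $(W_{l,k})_n=0$, this quotient vanishes except when $N=V_l$, where it is one–dimensional. This is precisely why the only genuinely infinite entries (rather than polynomials of degree $\le1$ in $t$) sit in column $V_i$, row $V_l$, where the periodic tail contributes $\sum_p(-t)^p$.

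It then remains to pin down the two coefficients surviving in the polynomial entries: the constant term $\dim\Hom_A(M,N)$ and the coefficient $-\dim\Ext^1(M,N)$ of $t$. Both come from applying $\Hom_A(-,N)$ to the explicit two–term resolution (or the first two terms of the resolution of $V_i$) and reading the single differential $N_\bullet\to N_\bullet$, which is a composite of structure maps of $N$ along a segment of the quiver: $\dim\Hom_A=\dim\ker$ and $\dim\Ext^1=\dim\cok$ of this map. Equivalently one may read these numbers directly off the explicit module structures in Section~\ref{s:representation-theory}, counting morphisms and non-split extensions. I would organise the verification by the relative order of the four indices $i,j,k,l$; this is exactly the partition into sub-cases displayed in each block of Table~\ref{tab:euler-form-in-t}, and the coefficients $1-t$, $1-2t$, $2-t$ simply record the regimes in which both the kernel and the cokernel of the differential are nonzero. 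The periodic tail from the second paragraph then supplies the geometric series in the $(V_i,V_l)$ block.

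The conceptual content is light; the main obstacle is organisational. It has two faces: carrying out the loop-sensitive syzygy computation uniformly across the whole $U$–family (where the temptation is to guess $\Omega U_{i,j}=S_n^{\oplus2}$ rather than the projective $P_n$, which would spuriously create higher $\Ext$), and then keeping the rank bookkeeping straight across the roughly ten positional regimes in each block so that the constant and linear coefficients match the stated values in every case. Once the resolutions and the $\alpha_N$–analysis are in place, every entry of the table follows by inspection.
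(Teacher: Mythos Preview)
Your proposal is correct and is exactly the sort of direct computation the paper has in mind; the paper itself gives no argument beyond the sentence ``This is obtained by a direct computation on $\Ext_A^*(M,N)$, which we omit.'' Your identification of the minimal projective resolutions---$0\to P_{j+1}\to P_i\to W_{i,j}\to 0$, the periodic resolution $\cdots\to P_n\xrightarrow{\alpha}P_n\to P_i\to V_i\to 0$, and in particular $\Omega U_{i,j}\cong P_n$ for every non-projective $U_{i,j}$---is accurate, and your explanation of why the only non-polynomial entries occur in the $(V_i,V_l)$ block (namely that $\ker\alpha_N/\im\alpha_N$ vanishes for $N=U_{l,k}$ and $N=W_{l,k}$ but is one-dimensional for $N=V_l$) is the right mechanism. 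The remaining case-by-case reading of $\dim\Hom$ and $\dim\Ext^1$ from the single differential in the short resolutions is routine and matches the partition into sub-cases in the table.
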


Notice that in the table of Lemma~\ref{lem:table-for-euler-form-in-t} all the entries are polynomials in $t$ but two, which are $\sum_{p=0}^{\infty}(-t)^p=\frac{1}{1+t}$. So Theorem~\ref{thm:euler-form-vs-cartan-matrix}(a) is proved and the form $\langle  M,  N\rangle_1$ is well-defined.

\begin{corollary}\label{Char}
\label{cor:table-for-euler-form}
The following Table 2 gives the values of $\langle M, N\rangle_1$ with column $M$ and row $N$.
\begin{table}
\caption{The values of $\langle M,  N\rangle_1$}
\medskip
 \resizebox{\textwidth}{65mm}{
\begin{tabular}{|c|c|c|c|}
  \hline
   & $U_{i,j}$ & $V_i$ & $W_{i,j}$ \\  \hline
   $U_{l,k}(1\leq k\leq l<n)$ & $\left\{
                                   \begin{array}{ll}
                                     -2, & \hbox{$l< \min\{i,j\}$;} \\
                                     -1, & \hbox{$k<j\leq l<i$;} \\
                                     -1, & \hbox{$k<i\leq l<j$;} \\
                                     0, & \hbox{$k<i\leq j\leq l$;} \\
                                     0, & \hbox{$k<j<i\leq l$;} \\
                                     0, & \hbox{$j\leq k\leq l<i$;} \\
                                     1, & \hbox{$j\leq k<i \leq l$;} \\
                                     0, & \hbox{$i\leq k\leq l<j$;} \\
                                     1, & \hbox{$i\leq k<j \leq l$;} \\
                                     2, & \hbox{$\max\{i,j\}\leq k$.}
                                   \end{array}
                                 \right.$
 & $\left\{
                                   \begin{array}{ll}
                                     -1, & \hbox{$k\leq l<i$;} \\
                                     0, & \hbox{$k<i\leq l$;} \\
                                     1, & \hbox{$i\leq k\leq l$.}
                                   \end{array}
                                 \right.$ &  $\left\{
                                   \begin{array}{ll}
                                     1, & \hbox{$k<i\leq l\leq j$;} \\
                                     2, & \hbox{$i\leq k\leq l\leq j$;} \\
                                     1, & \hbox{$i\leq k\leq j < l$;} \\
                                     0, & \hbox{other.}
                                   \end{array}
                                 \right.$   \\ \hline
  $U_{l,k}(1\leq l<k\leq n)$& $\left\{
                                   \begin{array}{ll}
                                     -2, & \hbox{$k< \min\{i,j\}$;} \\
                                     -1, & \hbox{$l<j\leq k<i$;} \\
                                     -1, & \hbox{$l<i\leq k<j$;} \\
                                     0, & \hbox{$l<i\leq j\leq k$;} \\
                                     0, & \hbox{$l<j<i\leq k$;} \\
                                     0, & \hbox{$j\leq l<k<i$;} \\
                                     1, & \hbox{$j\leq l<i \leq k$;} \\
                                     0, & \hbox{$i\leq l<k<j$;} \\
                                     1, & \hbox{$i\leq l<j \leq k$;} \\
                                     2, & \hbox{$\max\{i,j\}\leq l$.}
                                   \end{array}
                                 \right.$ & $\left\{
                                   \begin{array}{ll}
                                     -1, & \hbox{$l<k<i$;} \\
                                     0, & \hbox{$l<i\leq k$;} \\
                                     1, & \hbox{$i\leq l<k$.}
                                   \end{array}
                                 \right.$ &  $\left\{
                                   \begin{array}{ll}
                                     1, & \hbox{$l<i\leq k\leq j$;} \\
                                     2, & \hbox{$i\leq l<k\leq j$;} \\
                                     1, & \hbox{$i\leq l\leq j < k$;} \\
                                     0, & \hbox{other.}
                                   \end{array}
                                 \right.$      \\ \hline
  $U_{l,k}(1\leq k\leq l=n)$ & $\left\{
                                   \begin{array}{ll}
                                     0, & \hbox{$k<\min\{i,j\}$;} \\
                                     1, & \hbox{$j\leq k<i$;} \\
                                     1, & \hbox{$i\leq k<j$;} \\
                                     2, & \hbox{$\max\{i,j\}\leq k$.}
                                   \end{array}
                                 \right.$ & $\left\{
                                   \begin{array}{ll}
                                     0, & \hbox{$k<i$;} \\
                                     1, & \hbox{$i\leq k$.}
                                   \end{array}
                                 \right.$ & $\left\{
                                   \begin{array}{ll}
                                     1, & \hbox{$i\leq k\leq j$;} \\
                                     0, & \hbox{other.}
                                   \end{array}
                                 \right.$      \\ \hline
  $V_l(1\leq l\leq n)$ & $\left\{
                                   \begin{array}{ll}
                                     -1, & \hbox{$l<\min\{i,j\}$;} \\
                                     0, & \hbox{$j\leq l<i$;} \\
                                     0, & \hbox{$i\leq l<j$;} \\
                                     1, & \hbox{$\max\{i,j\}\leq l$.}
                                   \end{array}
                                 \right.$ & $\left\{
                                   \begin{array}{ll}
                                     \frac{1}{2}, & \hbox{$i\leq l$;} \\
                                     -\frac{1}{2}, & \hbox{$l< i$.}
                                   \end{array}
                                 \right.$ &  $\left\{
                                   \begin{array}{ll}
                                     1, & \hbox{$i\leq l\leq j$;} \\
                                     0, & \hbox{other.}
                                   \end{array}
                                 \right.$        \\ \hline
  $W_{l,k}(1\leq l\leq k< n)$ & $\left\{
                                   \begin{array}{ll}
                                     0, & \hbox{$l<k+1<\min\{i,j\}$;} \\
                                     -1, & \hbox{$l<j\leq k+1<i$;} \\
                                     -1, & \hbox{$l<i\leq k+1<j$;} \\
                                     -2, & \hbox{$l<i\leq j\leq k+1$;} \\
                                     -2, & \hbox{$l<j<i\leq k+1$;} \\
                                     0, & \hbox{$j\leq l<k+1< i$;} \\
                                     -1, & \hbox{$j\leq l<i \leq k+1$;} \\
                                     0, & \hbox{$i\leq l<k+1<j$;} \\
                                     -1, & \hbox{$i\leq l<j \leq k+1$;} \\
                                     0, & \hbox{$\max\{i,j\}\leq l$.}
                                   \end{array}
                                 \right.$ & $\left\{
                                   \begin{array}{ll}
                                     0, & \hbox{$l<k+1<i$;} \\
                                     -1, & \hbox{$l<i\leq k+1$;} \\
                                     0, & \hbox{$i\leq l<k+1$.}
                                   \end{array}
                                 \right.$ & $\left\{
                                   \begin{array}{ll}
                                     1, & \hbox{$i\leq l\leq j<k+1$;} \\
                                     -1, & \hbox{$l<i\leq k+1\leq j$;} \\
                                     0, & \hbox{$i\leq l<k+1\leq j$;} \\
                                     0, & \hbox{other.}
                                   \end{array}
                                 \right.$     \\
  \hline
\end{tabular}}
\end{table}
\end{corollary}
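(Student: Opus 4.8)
The plan is to read off Table~2 from Table~1 of Lemma~\ref{lem:table-for-euler-form-in-t} by the single substitution $t=1$. By definition $\langle M,N\rangle_1$ is the value of the power series $\langle M,N\rangle_t$ at $t=1$, and this evaluation is legitimate because every entry of Table~1 is a rational function of $t$ that is regular at $t=1$ --- precisely the content of Theorem~\ref{thm:euler-form-vs-cartan-matrix}(a), which was observed right after the statement of Lemma~\ref{lem:table-for-euler-form-in-t}. So it suffices to specialise each cell of Table~1, keeping the case conditions unchanged.

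First I would dispose of all the cells whose value is a polynomial in $t$, which is every cell except the two in the $(V_l,V_i)$ position. Here the specialisation is mechanical and case-by-case: $-2t\mapsto-2$, $-t\mapsto-1$, $1-t\mapsto0$, $1-2t\mapsto-1$, $2-t\mapsto1$, while the constant values $0,1,2$ are untouched. Matching this against Table~2 block by block --- the five row-types $U_{l,k}$ ($k\leq l<n$), $U_{l,k}$ ($l<k$), $U_{n,k}$, $V_l$, $W_{l,k}$, each paired with the three column-types $U_{i,j}$, $V_i$, $W_{i,j}$ --- one sees the conditions are copied verbatim and only the $t$-dependent values collapse under $t\mapsto1$.

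It then remains to treat the two exceptional cells. For $i\leq l$ one has $\langle V_l,V_i\rangle_t=\sum_{p=0}^{\infty}(-t)^p=\frac{1}{1+t}$, which at $t=1$ equals $\frac12$; for $l<i$ one has $\langle V_l,V_i\rangle_t=\sum_{p=1}^{\infty}(-t)^p=\frac{1}{1+t}-1=\frac{-t}{1+t}$, which at $t=1$ equals $-\frac12$. These are exactly the entries in the $(V_l,V_i)$ cell of Table~2.

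There is essentially no obstacle here: all the genuine work lies in Table~1 itself, i.e.\ in the direct computation of $\mathrm{Ext}_A^{\ast}(M,N)$ for every pair of indecomposables (the computation the paper omits). Granting Table~1, the only point that deserves a sentence is that the two geometric-series entries really do sum to rational functions with no pole at $t=1$, and that was already recorded before the present statement.
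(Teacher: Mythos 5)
Your proposal is correct and coincides with the paper's (implicit) argument: the corollary is obtained from Table~1 of Lemma~\ref{lem:table-for-euler-form-in-t} simply by evaluating each entry at $t=1$, the only non-polynomial entries being the two geometric series $\tfrac{1}{1+t}$ and $\tfrac{-t}{1+t}$, which give $\tfrac12$ and $-\tfrac12$. As you note, all the substantive work is already contained in Table~1, so nothing further is required here.
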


The following proposition is Theorem~\ref{thm:euler-form-vs-cartan-matrix}(b).

\begin{proposition}
The form $\langle -, -\rangle_1$ is additive, i.e., $\forall M$ and $N$ in ${\rm rep}_K(Q,I)$,
 $$\langle M, N\rangle_1=\sum_{i=1}^{n}\sum_{j=1}^{n}({\rm dim}_KM_i)({\rm dim}_KN_j)\langle S_i, S_j\rangle_1.$$
\end{proposition}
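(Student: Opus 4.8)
The plan is to reduce to the case where $M$ and $N$ are indecomposable, and then to verify the identity directly against Table~2. Both sides of the asserted equality are additive with respect to split short exact sequences in each of the two arguments: the right-hand side is $\mathbb{Z}$-bilinear in the dimension vectors and $\dimv$ is additive on direct sums, while $\langle -,-\rangle_t$ is additive on split short exact sequences (as recalled just before Theorem~\ref{thm:euler-form-vs-cartan-matrix}), hence so is its specialization $\langle -,-\rangle_1$, which is legitimate by Theorem~\ref{thm:euler-form-vs-cartan-matrix}(a). Writing $M$ and $N$ as direct sums of indecomposables, it therefore suffices to prove the identity for $M,N\in\{U_{i,j}, V_i, W_{i,j}\}$.

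I would first record the dimension vectors. Writing $\delta_1,\dots,\delta_n$ for the standard basis of $\mathbb{Z}^n$ (so $\dimv S_i=\delta_i$), Theorem~\ref{thm:Gabriel's-theorem}(b) — or the explicit representations in Section~\ref{s:representation-theory} — gives $\dimv W_{i,j}=\delta_i+\cdots+\delta_j$, $\dimv V_i=\delta_i+\cdots+\delta_n$, and $\dimv U_{i,j}=(\delta_{\min(i,j)}+\cdots+\delta_n)+(\delta_{\max(i,j)}+\cdots+\delta_n)$. Next, reading off the relevant cells of Table~2 with $S_i=W_{i,i}$ for $i<n$ and $S_n=V_n$ shows that the Gram matrix $G=(\langle S_i,S_j\rangle_1)_{1\le i,j\le n}$ is lower bidiagonal, with $1$ on the diagonal except for $\tfrac12$ in position $(n,n)$, and $-1$ on the subdiagonal. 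Since $G$ is bidiagonal, for a dimension vector $\dimv N=(d_1,\dots,d_n)$ one has $(G\,(\dimv N)^t)_p=d_p-d_{p-1}$ for $p<n$ (with $d_0:=0$) and $(G\,(\dimv N)^t)_n=\tfrac12 d_n-d_{n-1}$, so the right-hand side equals
\[
(\dimv M)\,G\,(\dimv N)^t=\sum_{p=1}^{n-1}(\dimv M)_p\,(d_p-d_{p-1})+(\dimv M)_n\Big(\tfrac12 d_n-d_{n-1}\Big).
\]

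Now, for each pair $(M,N)$ of indecomposable types I would evaluate this expression. Every dimension vector above is supported on an interval, possibly with a doubled tail reaching the vertex $n$, so $d_p-d_{p-1}$ is nonzero at only one or two vertices and the sum collapses to a short expression in the endpoints of the intervals defining $M$ and $N$. Arranging these endpoints in all possible relative positions, and keeping separate track of whether the vertex $n$ is involved, reproduces precisely the positional case distinctions in the corresponding cell of Table~2, and comparing gives the equality. For example, for $M=W_{a,b}$ and $N=W_{c,d}$ the displayed formula yields $(\dimv M)\,G\,(\dimv N)^t=\mathbf 1_{a\le c\le b}-\mathbf 1_{a\le d+1\le b}$, which one checks equals the four-line value of $\langle W_{a,b},W_{c,d}\rangle_1$ in Table~2; the remaining type pairs are handled in the same way. (In particular this exhibits that $\langle U_{i,j},-\rangle_1$ and $\langle U_{j,i},-\rangle_1$ agree, as they must.)

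The main obstacle is bookkeeping rather than mathematics: the verification breaks into of the order of $5\times 5$ combinations of types (with $U_{i,j}$ split according to $j\le i$, $i<j$, and the degenerate case $i=j$), each with several positional subcases that must be matched line by line with Table~2, and the vertex $n$ needs separate attention throughout because of the loop, the $\tfrac12$ in $G$, and the doubled tail of $\dimv U_{i,j}$. A long-exact-sequence argument in one variable would reduce additivity to the divisibility of $\langle M,N\rangle_t-\langle M',N\rangle_t-\langle M'',N\rangle_t$ by $1-t$ for every short exact sequence $0\to M'\to M\to M''\to 0$, but checking this is essentially the same computation, so it offers no real shortcut. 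Each individual case, however, is an elementary telescoping identity; and the two non-polynomial entries of Table~1, both equal to $\sum_{p\ge 0}(-t)^p=\tfrac1{1+t}$, have already been absorbed in passing to Table~2, so at $t=1$ only finite integers and half-integers occur.
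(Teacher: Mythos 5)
Your proof is correct and follows essentially the same route as the paper's: both sides are reduced to the case of indecomposable $M$ and $N$ via additivity on direct sums, and the identity is then verified type by type against Table~2; your only organizational difference is computing the right-hand side as $(\dimv M)\,G\,(\dimv N)^t$ with a bidiagonal $G$ and a telescoping sum, rather than summing $\langle S_a,S_{a'}\rangle_1$ over the supports directly as the paper does in its three sample cases.

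One correction to an intermediate claim: the Gram matrix $G=(\langle S_i,S_j\rangle_1)$ is \emph{upper} bidiagonal, not lower bidiagonal. Indeed $\langle S_i,S_{i+1}\rangle_1=\dim_K\Hom(S_i,S_{i+1})-\dim_K\Ext^1_A(S_i,S_{i+1})=-1$ while $\langle S_{i+1},S_i\rangle_1=0$, and the appendix concludes that $G=C_A^{-t}$, which is upper triangular. Your reading comes from taking the caption of Table~2 (``column $M$ and row $N$'') literally, whereas the paper's own computations (e.g.\ Case~3 of its proof, which reads the cell in row $W_{l,k}$, column $W_{i,j}$ as $\langle W_{l,k},W_{i,j}\rangle_1$) use the opposite convention; the table as captioned is the transpose of the true form. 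Because you read both the simple-by-simple entries and the general entries with the same transposed convention, your case checks actually verify the identity with $M$ and $N$ interchanged, which is logically equivalent, so the proof survives; but your displayed $G$ and the sample formula $(\dimv M)\,G\,(\dimv N)^t=\mathbf{1}_{a\le c\le b}-\mathbf{1}_{a\le d+1\le b}$ for $M=W_{a,b}$, $N=W_{c,d}$ should be transposed (the correct value of $\langle W_{a,b},W_{c,d}\rangle_1$ is $\mathbf{1}_{a\le d\le b}-\mathbf{1}_{a\le c-1\le b}$).
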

\begin{proof}
We may assume that $M$ and $N$ are indecomposable.
We prove the equality for some typical cases by explicitly computing the values of both sides.

\smallskip

Case 1. $M=U_{l,k}$ and $N=U_{i,j}$ for $i \leq k \leq l < j$. By Corollary \ref{Char} we have
\[{\rm LHS}=\langle U_{l,k}, U_{i,j}\rangle_1=0,\]
and by direct computation,
\[\begin{split}
  {\rm RHS}=&\sum_{a=k}^{n}\sum_{a'=i}^{n}\langle S_a, S_{a'}\rangle_1+\sum_{a=k}^{n}\sum_{a'=j}^{n}\langle S_a, S_{a'}\rangle_1 +\sum_{a=l}^{n}\sum_{a'=i}^{n}\langle S_a, S_{a'}\rangle_1+ \sum_{a=l}^{n}\sum_{a'=j}^{n}\langle S_a, S_{a'}\rangle_1\\
=&\sum_{a=k}^{n}\langle S_a, S_a\rangle+\sum_{a=k}^{n-1}\langle S_a, S_{a+1}\rangle_1+\sum_{a=j}^{n}\langle S_a, S_a\rangle_1+\sum_{a=j}^{n}\langle S_{a-1}, S_a\rangle_1\\
&+\sum_{a=l}^{n}\langle S_a, S_a\rangle_1+\sum_{a=l}^{n-1}\langle S_a, S_{a+1}\rangle_1+\sum_{a=j}^{n}\langle S_a, S_a\rangle_1+\sum_{a=j}^{n}\langle S_{a-1}, S_a\rangle_1\\
=&[(n-k)\times 1+\frac{1}{2}]+(n-k)\times(-1)+[(n-j)\times 1+\frac{1}{2}]+(n-j+1)\times(-1)\\
&+[(n-l)\times 1+\frac{1}{2}]+(n-l)\times(-1)+[(n-j)\times 1+\frac{1}{2}]+(n-j+1)\times(-1)\\
=&\frac{1}{2}+\frac{1}{2}-1+\frac{1}{2}+\frac{1}{2}-1=0.\\
\end{split}
\]

Case 2. $M=V_{l}$ and $N=V_{i}$ for $l<i$. By Corollary \ref{Char}
\[{\rm LHS}=\langle V_{l}, V_{i}\rangle_1=-\frac{1}{2},\]
and on the other hand,
\[\begin{split}
{\rm RHS}
=&\sum_{a=l}^{n}\sum_{a'=i}^{n}\langle S_a, S_{a'}\rangle_1=\sum_{a=i}^{n}\langle S_a, S_a\rangle_1+\sum_{a=i}^{n}\langle S_{a-1}, S_a\rangle_1\\
=&[(n-i)\times 1+\frac{1}{2}]+(n-i+1)\times(-1)=\frac{1}{2}-1
=-\frac{1}{2}.
\end{split}
\]

Case 3. $M=W_{l,k}$ and $N=W_{i,j}$ for $l < i\leq k + 1 \leq j(<n)$. By Corollary \ref{Char}
\[{\rm LHS}=\langle W_{l,k}, W_{i,j}\rangle=-1,\]
and by direct computation,
\[\begin{split}
{\rm RHS}
=&\sum_{a=l}^{k}\sum_{a'=i}^{j}\langle S_a, S_{a'}\rangle_1\\
=&\sum_{a=i}^{k}\langle S_a, S_a\rangle_1+\sum_{a=i}^{k}\langle S_{a-1}, S_a\rangle_1+\langle S_k, S_{k+1}\rangle_1 \quad(\text{note: } k+1<n)\\
=&(k-i+1)\times 1+(k-i+1)\times(-1)+(-1)=-1.
\end{split}
\]

The proof of the other cases is similar and we obtain the additivity.
\end{proof}

Finally, by Corollary~\ref{cor:table-for-euler-form} the matrix of the form $\langle M,N\rangle_1$ with respect to the basis $\{S_1,\ldots,S_n\}$ is:
\[\left(
  \begin{array}{ccccccc}
    1& -1 & 0 & \ldots & 0 & 0 & 0 \\
    0& 1 & -1 &\ldots & 0 & 0 & 0 \\
    0& 0 & 1 & \ldots &0 & 0 & 0 \\
    \vdots& \vdots & \vdots & \ddots & \vdots & \vdots & \vdots \\
    0 & 0 & 0 & \ldots & 1 &-1 & 0 \\
    0 & 0 & 0 & \ldots & 0 & 1 & -1 \\
    0 & 0 & 0 & \ldots & 0 & 0 & \frac{1}{2} \\
  \end{array}
\right),\]
which is exactly $C_A^{-t}$. This is Theorem~\ref{thm:euler-form-vs-cartan-matrix}(c).


this study.


\begin{thebibliography}{10}

\bibitem{AssemSimsonSkowronski06}
Ibrahim Assem, Daniel Simson, and Andrzej Skowro{\'n}ski, \emph{Elements of the
  representation theory of associative algebras. {V}ol. 1}, London Mathematical
  Society Student Texts, vol.~65, Cambridge University Press, Cambridge, 2006,
  Techniques of representation theory.

\bibitem{BuanMarshVatne10}
Aslak {Bakke Buan}, Robert~J. Marsh, and Dagfinn~F. Vatne, \emph{Cluster
  structures from 2-{C}alabi--{Y}au categories with loops}, Math. Z.
  \textbf{265} (2010), no.~4, 951--970.

\bibitem{BobinskiGeissSkowronski04}
Grzegorz Bobi{\'n}ski, Christof Gei{\ss}, and Andrzej Skowro{\'n}ski,
  \emph{Classification of discrete derived categories}, Cent. Eur. J. Math.
  \textbf{2} (2004), no.~1, 19--49 (electronic).

\bibitem{BoosReineke12}
Magdalena Boos and Markus Reineke, \emph{{$B$}-orbits of 2-nilpotent matrices
  and generalizations}, Highlights in {L}ie algebraic methods, Progr. Math.,
  vol. 295, Birkh\"{a}user/Springer, New York, 2012, pp.~147--166.

\bibitem{Bourbaki68}
Nicolas Bourbaki, \emph{\'{E}l\'{e}ments de math\'{e}matique. {F}asc. {XXXIV}.
  {G}roupes et alg\`ebres de {L}ie. {C}hapitre {IV}: {G}roupes de {C}oxeter et
  syst\`emes de {T}its. {C}hapitre {V}: {G}roupes engendr\'{e}s par des
  r\'{e}flexions. {C}hapitre {VI}: syst\`emes de racines}, Actualit\'{e}s
  Scientifiques et Industrielles [Current Scientific and Industrial Topics],
  No. 1337, Hermann, Paris, 1968.

\bibitem{FuGengLiu20}
Changjian Fu, Shengfei Geng, and Pin Liu, \emph{Cluster algebras arising from
  cluster tubes ii: the caldero-chapoton map}, J. Algebra \textbf{544} (2020),
  228--261.

\bibitem{FuGengLiu21}
\bysame, \emph{Cluster algebras arising from cluster tubes i: integer vectors},
  Math. Z. \textbf{297} (2021), 1793--1824.

\bibitem{Gabriel72}
P.~Gabriel, \emph{Unzerlegbare {Darstellungen} {I}}, Manuscripta Math.
  \textbf{6} (1972), 71--103.

\bibitem{GeissLeclercSchroeer16}
Christof Gei\ss, Bernard Leclerc, and Jan Schr\"{o}er, \emph{Quivers with
  relations for symmetrizable {C}artan matrices {III}: {C}onvolution algebras},
  Represent. Theory \textbf{20} (2016), 375--413. \MR{3555157}

\bibitem{Humphreys72}
James~E. Humphreys, \emph{Introduction to {Lie} algebras and representation
  theory}, Graduate Texts in Mathematics, vol.~9, Springer--Verlag, 1972.

\bibitem{Kac90}
Victor~G. Kac, \emph{Infinite-dimensional {L}ie algebras}, third ed., Cambridge
  University Press, Cambridge, 1990.

\bibitem{Lusztig91b}
George Lusztig, \emph{Intersection cohomology methods in representation
  theory}, Proceedings of the {I}nternational {C}ongress of {M}athematicians,
  {V}ol. {I}, {II} ({K}yoto, 1990), Math. Soc. Japan, Tokyo, 1991,
  pp.~155--174.

\bibitem{PengXiao97}
Liangang Peng and Jie Xiao, \emph{Root categories and simple {L}ie algebras},
  J. Algebra \textbf{198} (1997), no.~1, 19--56.

\bibitem{Riedtmann94}
Christine Riedtmann, \emph{Lie algebras generated by indecomposables}, J.
  Algebra \textbf{170} (1994), no.~2, 526--546.

\bibitem{Schofieldb}
Aidan Schofield, \emph{Notes on constructing {L}ie algebras from
  finite-dimensional algebras}, Manuscript.

\bibitem{Vatne11}
Dagfinn~F. Vatne, \emph{Endomorphism rings of maximal rigid objects in cluster
  tubes}, Colloq. Math. \textbf{123} (2011), 63--93.

\bibitem{Vossieck01}
Dieter Vossieck, \emph{The algebras with discrete derived category}, J. Algebra
  \textbf{243} (2001), no.~1, 168--176.

\bibitem{Xiao97}
Jie Xiao, \emph{Drinfeld double and ringel--green theory of hall algebras}, J.
  Algebra \textbf{190} (1997), 100--144.

\bibitem{Yang12}
Dong Yang, \emph{Endomorphism algebras of maximal rigid objects in cluster
  tubes}, Comm. Alg. \textbf{40} (2012), 4347--4371.

\bibitem{ZhouZhu14}
Yu~Zhou and Bin Zhu, \emph{Cluster algebras arising from cluster tubes}, J.
  London Math. Soc. (2) \textbf{89} (2014), 703--723.

\end{thebibliography}
\def\cprime{$'$}
\providecommand{\bysame}{\leavevmode\hbox to3em{\hrulefill}\thinspace}
\providecommand{\MR}{\relax\ifhmode\unskip\space\fi MR }
\providecommand{\MRhref}[2]{%
  \href{http://www.ams.org/mathscinet-getitem?mr=#1}{#2}
}
\providecommand{\href}[2]{#2}

\end{document}